\documentclass[11pt,reqno]{amsart} 

\setcounter{tocdepth}{1}

\usepackage[margin = 1.1in]{geometry}
\usepackage{amsfonts, amsmath,amscd, amssymb, cite, color, latexsym, mathrsfs, mathtools,
slashed, stmaryrd, verbatim, wasysym }
\usepackage[all]{xy}
\usepackage[pdftex]{graphicx}
\usepackage{hyperref}
\usepackage[applemac]{inputenc}
\usepackage[titletoc,title]{appendix}
\usepackage{tikz-cd}
\usepackage{oubraces}
\usepackage{cancel}
\usepackage{float}
\allowdisplaybreaks
\usepackage{tensor}


\usepackage{import}
\usepackage{xifthen}
\usepackage{pdfpages}
\usepackage{transparent}

\newcommand{%
    
    \import{/Users/hadrianquan/Desktop/}{.pdf_tex}
}[1]{%
    
    \import{/Users/hadrianquan/Desktop/}{#1.pdf_tex}
}
\DeclareMathAlphabet{\mathboondoxfrak}{U}{BOONDOX-frak}{m}{n}


\usepackage{tikz}
\usetikzlibrary{decorations.pathmorphing, matrix, calc, arrows}

\newlength{\LETTERheight}
\AtBeginDocument{\settoheight{\LETTERheight}{I}}

\newtheorem{theorem}{Theorem}
\newtheorem{corollary}[theorem]{Corollary}

\newtheorem{lemma}[theorem]{Lemma}
\newtheorem{proposition}[theorem]{Proposition}

\numberwithin{equation}{section}
\numberwithin{theorem}{section}

\let\oldsqrt\sqrt
\def\sqrt{\mathpalette\DHLhksqrt}
\def\DHLhksqrt#1#2{%
\setbox0=\hbox{$#1\oldsqrt{#2\,}$}\dimen0=\ht0
\advance\dimen0-0.2\ht0
\setbox2=\hbox{\vrule height\ht0 depth -\dimen0}%
{\box0\lower0.4pt\box2}}



\newcommand{\wt}[1]{\widetilde{#1}}


\newcommand{\Rp}{\mathbb{R}^+}

\newcommand\pa{\partial}

\newcommand\eps\varepsilon
\renewcommand\epsilon\varepsilon

\newcommand{\Cl}{\mathbb{C}l}





\newcommand\CI{{\mathcal{C}}^{\infty}}





\newcommand\End{\operatorname{End}}

\newcommand\Id{\operatorname{Id}}

\newcommand\inj{\operatorname{inj}}

\newcommand\Ker{\operatorname{Ker}}

\newcommand\supp{\operatorname{supp}}

\newcommand\Vol{\operatorname{Vol}}


\newcommand\paperintro%
        {%
         }
\newcommand\paperbody%
        {%
         }


\newcommand\bbR{\mathbb{R}}

\newcommand\bbZ{\mathbb{Z}}

\newcommand\cB{\mathcal{B}}
\newcommand\cC{\mathcal{C}}
\newcommand\cD{\mathcal{D}}

\newcommand\cF{\mathcal{F}}

\newcommand\cI{\mathcal{I}}

\newcommand\cO{\mathcal{O}}

\newcommand\cW{\mathcal{W}}

\newcommand\sD{\mathscr{D}}



\DeclareMathAlphabet{\mathpzc}{OT1}{pzc}{m}{it}
\newcommand{\cl}{\mathpzc{cl}}

\hyphenation{mezzo-perversities mezzo-perversity}

\newcommand{\D}{\slashed{D}}


\begin{document}
\title[The Calder\'{o}n problem for the Fractional Dirac Operator]{The Calder\'{o}n problem for the Fractional Dirac Operator}
\author{Hadrian Quan, Gunther Uhlmann}
\maketitle

\begin{abstract}
    We show that knowledge of the source-to-solution map for the fractional Dirac operator acting  over sections of a Hermitian vector bundle over a smooth closed connencted Riemannian manifold of dimension $m\geq 2$ determines uniquely the smooth structure, Riemannian metric, Hermitian bundle and connection, and its Clifford modulo up to a isometry. We also mention several potential applications in physics and other fields.
\end{abstract}

\section{Introduction} 

The Calder\'on problem asks whether one can determine the electrical conductivity of a medium by making voltage and current measurements at the boundary.  In the anisotropic case, that is when the conductivity depends on direction. It is modelled by a positive-definite symmetric matrix. It was shown in \cite{L-U} that this problem is equivalent to determining a Riemannian metric from the associated Dirichlet to Neumann (DN) map associated with harmonic functions. Therefore this problem can be considered on a compact Riemannian manifold with boundary and arises also in the AdS/CFT correspondence \cite{CG}. See \cite{U1}, \cite{U2} for more details and other results.

The study of the fractional Calder\'{o}n problem was initiated in \cite{GSU} where the unknown potential in the fractional Schr\"{o}dinger equation on a bounded domain in the Euclidean space was determined from exterior
measurements. An important generalization was in the work \cite{GLX} where the Euclidean Laplacian is replaced by the Laplace-Beltrami operator of a Riemannian metric. Following these works, inverse
problems of recovering lower order terms for fractional elliptic equations have been studied extensively, see for example \cite{GRSU}, \cite{GLX}, \cite{RS1}, \cite{RS2}, \cite{BGU}, \cite{CLR}, \cite{Covi1}, \cite{Covi2}, \cite{Covi3}, \cite{CG-FR}, \cite{CMRU},  \cite{Li1}, \cite{Li2}, \cite{Ru} for some of the  contributions. In all of those papers, it is assumed that the leading order coefficients are known.

In the article \cite{FGKU}, it was solved the fractional anisotropic Calder\'{o}n problem on closed Riemannian manifolds of dimensions two and higher that the knowledge of the local source-to-solution map
for the fractional Laplacian, given an arbitrary small open nonempty a priori known subsef a smooth closed connected Riemannian manifold, determines the Riemannian manifold up to an isometry. This can be viewed as a nonlocal analog of the anisotropic Calder\'{o}n problem in the setting of closed Riemannian manifolds, which is  open in dimensions three and higher.
\vspace{.4cm}

We consider in this paper the anisotropic Calderon problem for the fractional Dirac operator acting on sections of a Hermitian vector bundle over a smooth closed connected Riemannian manifold $M$ of dimension $m\geq 2$. Just from local measurements confined to an arbitrary non-empty open set, we show that the fractional Dirac operator determines the smooth structure, Riemannian metric, Hermitian bundle and connection, and its Clifford module, up to a isometry which fixes the set in question. We will first briefly motivate the concept of a generalized Dirac operator, and its fractional powers, before explaining the types of measurements we allow, and stating our results.

In his attempts to quantize electromagnetism, Dirac defined the Dirac operator while trying to find a 1st order differential operator such that its square was the ``Laplacian" on Minkowski space (in fact this was a factorization of the d'Alembertian). The data of the Clifford algebra (see \S \ref{dirac-def}) entered into his definition after he realized this was not possible for a scalar operator. On a vector bundle $E$ over a Riemannian manifold $M$, the definition of a generalized Dirac operator $\D$ is motivated by the desire to capture this property that $\D\circ \D=\Delta_E$ is a generalized Laplace-type operator on $E$, i.e. the Bochner Laplacian $\nabla^*\nabla$ of some connection up to lower order terms. This factorization of $\Delta_E$ implies $\D$ is elliptic and hence admits a spectral resolution  $\{\varphi_k,\lambda_k\}$ of $L^2(M;E)$-orthonormal eigensections, with discrete eigenvalues accumulating only at $\pm \infty$. From the spectral theorem we can define for $\alpha\in (0,1)$, the fractional Dirac operator by $\D^\alpha \varphi_k = \lambda_k^\alpha \varphi_k$. From unique solvability of the ``Poisson equation", 
\[ \D^\alpha u=f, \] 
for data $f$ orthogonal to $\Ker(\D)$, we can consider the operator which maps a source function $f\in \CI_0(\cO; E|_\cO)\cap \Ker(\D)^\perp$ to the solution,
\[ L_{\D,\cO}: f\mapsto u:=(\D^{-\alpha}f)|_{\cO} , \]
for $f\perp \Ker(\D)$. We refer to this operator as the \emph{source-to-solution map}. We prove,

\begin{theorem}\label{main-thm}

Let $\sD_j:=(M_j, E_j, g_j, h_{E_j}, \nabla^{E_j})$ for $j=1,2$ be two Dirac bundles (see \S \ref{dirac-def}) over closed Riemannian manifolds of dimension $n\geq 2$, and let $\cO_j\subset M_j$ be non-empty open sets. Assuming there exists a diffeomorphism $\psi:\cO_1\to\cO_2$ satisfying 
\[ L_{\D_1,\cO_1}(\psi^*f) = \psi^*L_{\D_2,\cO_2}(f) ,  \]
for all $f\in \CI_0(\cO_1;E_1|_{\cO_1})$ orthogonal to $\Ker(\D_1)$. Then there is an isomorphism of Hermitian vector bundles $\Psi:E_1\to E_2$, covering an isometry $\psi:M_1\to M_2$ which restricts to $\Psi|_{E_1|_{\cO_1}}=\psi$.
\end{theorem}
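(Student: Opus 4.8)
The plan is to reduce the source-to-solution data to local spectral data on $\cO$, recover $(M,g)$ from it by a reconstruction in the spirit of \cite{FGKU}, and then upgrade the local information to a global bundle isomorphism. \emph{Step 1 (from the source-to-solution map to restricted spectral data).} On $\Ker(\D)^{\perp}$ the operator $\D^{-\alpha}$ can be expressed through the functional calculus of $\D^{2}=\Delta_{E}$: either by a Caffarelli--Silvestre / Stinga--Torrea extension, realizing $L_{\D,\cO}$ as a Dirichlet-to-Neumann operator for a degenerate elliptic system on $M\times\Rp$ with a degenerate weight in the vertical variable and with horizontal part the Laplace-type operator $\Delta_{E}$ on $E$ (principal symbol $|\xi|_{g}^{2}\,\Id_{E}$), or directly by the subordination formula for the heat semigroup of $\Delta_{E}$. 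In either case $L_{\D,\cO}$ determines the Schwartz kernel of $\D^{-\alpha}$ on $\cO\times\cO$, namely $\sum_{k}\lambda_{k}^{-\alpha}\,\varphi_{k}(x)\otimes\varphi_{k}(y)^{\ast}$ for $x,y\in\cO$. If the hypothesis is available for a range of $\alpha$, reading off the exponential rates in $\alpha$ recovers the nonzero eigenvalues $\{\lambda_{k}\}$ together with the restricted spectral projections $\Pi_{\lambda}(x,y)\colon E_{y}\to E_{x}$ for $x,y\in\cO$; for a single $\alpha$ one instead invokes strong unique continuation for the extension system --- which follows from the Carleman estimates behind \cite{GSU,FGKU} by absorbing the lower-order Clifford and connection terms as perturbations --- to obtain a Runge approximation property for the sections $(\D^{-\alpha}f)|_{\cO}$, and this yields the same conclusion. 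Under the hypothesis these data for $\sD_{1}$ and $\sD_{2}$ correspond under $\psi$ and the prescribed identification of $E_{1}|_{\cO_{1}}$ with $E_{2}|_{\cO_{2}}$.

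\emph{Step 2 (recovery of $(M,g)$ and the smooth structure).} Taking fibrewise traces produces scalar local spectral data for the Laplace-type operator $\Delta_{E}$ --- for instance the local density of states $\sum_{\lambda_{k}=\lambda}|\varphi_{k}(x)|_{h_{E}}^{2}$ and the Hilbert--Schmidt norms $\tr\bigl(\Pi_{\lambda}(y,x)\Pi_{\lambda}(x,y)\bigr)$ of the restricted projections --- whose principal symbol is $|\xi|_{g}^{2}$. By the Gel'fand-type reconstruction of \cite{FGKU} from local spectral data, adapted from the scalar Laplacian to a Laplace-type operator with this principal symbol and with observation set $\cO$, these data determine $(M,g)$ and its smooth structure up to an isometry restricting to $\psi$ on $\cO$. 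Fix such an isometry and from now on take $M_{1}=M_{2}=M$, $g_{1}=g_{2}=g$, $\cO_{1}=\cO_{2}=\cO$, $\psi=\id$.

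\emph{Step 3 (recovery of $E$, $h_{E}$, $\nabla^{E}$ and the Clifford module).} Over $\cO$ we now have a Hermitian isomorphism $\Psi_{0}\colon E_{1}|_{\cO}\to E_{2}|_{\cO}$ intertwining the restricted spectral projections. Since each $\varphi_{k}^{(j)}$ solves $\Delta_{E_{j}}\varphi=\lambda_{k}^{2}\varphi$, Aronszajn-type strong unique continuation shows $\varphi_{k}^{(j)}$ is determined on all of $M$ by its restriction to $\cO$; hence ``restrict to $\cO$, push forward by $\Psi_{0}$, continue uniquely'' sends each $\lambda$-eigenspace of $\D_{1}$ onto that of $\D_{2}$, and these assemble into a unitary $T\colon L^{2}(M;E_{1})\to L^{2}(M;E_{2})$ with $T\D_{1}=\D_{2}T$ that equals multiplication by $\Psi_{0}$ over $\cO$. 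The symbol identities $\sigma_{1}(\D_{j})(x,\xi)=\mathrm{i}\,c_{j}(\xi)$ and $\D_{j}^{2}=(\nabla^{E_{j}})^{\ast}\nabla^{E_{j}}+\cE_{j}$ then let one read off, near $\cO$, the Clifford actions $c_{j}$, the bundle metrics $h_{E_{j}}$ and the connections $\nabla^{E_{j}}$ (up to gauge); propagating $\Psi_{0}$ by parallel transport along paths issuing from $\cO$, and using $T$ together with unique continuation to rule out a holonomy obstruction, extends $\Psi_{0}$ to a global Hermitian bundle isomorphism $\Psi\colon E_{1}\to E_{2}$ intertwining $\nabla^{E_{1}},\nabla^{E_{2}}$ and $c_{1},c_{2}$, hence $\D_{1}$ and $\D_{2}$. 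In particular $\Psi$ covers $\psi$ and restricts over $\cO$ to the prescribed data, completing the proof.

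\emph{Main obstacle.} The hard part will be the globalization in Step 3: promoting the \emph{linear} intertwiner $T$ and the \emph{local} gauge $\Psi_{0}$ to a bundle isomorphism defined at every point of $M$, that is, showing that the identification of $(E,h_{E},\nabla^{E},c)$ over $\cO$ propagates consistently --- with no holonomy obstruction --- to all of $M$; this is the place where the geometric reconstruction and unique continuation for the Dirac system have to be combined in an essential way. A secondary, more technical point is the strong unique continuation and Runge property for the degenerate extension \emph{system} attached to $\D^{\alpha}$, which should follow from the scalar Carleman estimates by absorbing the zeroth- and first-order Clifford/connection terms as perturbations, but must be verified.
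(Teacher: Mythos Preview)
Your outline shares its overall shape with the paper --- pass from the fractional source-to-solution map to complete local analytic data, then reconstruct $(M,g)$, then recover the bundle --- but the execution diverges in two places, and in both the work you defer is precisely the content of the paper's argument.

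\textbf{Step 2.} Taking fibrewise traces of the restricted spectral projections does \emph{not} produce the local spectral data of any scalar operator to which \cite{FGKU} applies. The quantities $\sum_{\lambda_k=\lambda}|\varphi_k(x)|_{h_E}^2$ and $\tr\bigl(\Pi_\lambda(y,x)\Pi_\lambda(x,y)\bigr)$ are not eigenfunctions of $\Delta_g$, nor do they assemble into a source-to-solution map for a scalar Laplacian; the phrase ``adapted from the scalar Laplacian to a Laplace-type operator'' hides exactly the adaptation that has to be carried out. The paper does this adaptation by a different route: it first shows (via the identity $\D^\alpha=\Delta_E^{(\alpha-1)/2}\D$ and integration by parts against the heat semigroup) that $L_{\D,\cO}$ determines the \emph{bundle-valued} heat kernel $e^{-t\Delta_E}(x,y)$ on $\cO\times\cO$, then uses the Kannai transmutation to obtain the bundle-valued wave source-to-solution map, and finally runs the boundary control method (finite speed, Tataru unique continuation, approximate controllability, Blagovestchenskii identity) directly for sections of $E$. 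The metric reconstruction then comes from the bundle-valued Blagovestchenskii identity and the distance-data argument of \cite{HLOS}, not from a reduction to scalar spectral data.

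\textbf{Step 3.} Your proposed globalization is circular as written: you propagate $\Psi_0$ by parallel transport in $\nabla^{E_j}$, but the connection is among the unknowns you are trying to determine. You correctly flag the promotion of the $L^2$-unitary $T$ to a pointwise bundle map as the ``main obstacle'', but you do not supply a mechanism. The paper's mechanism is concrete and avoids this circularity: using approximate controllability one builds, for each $x\in M$, explicit double sequences of sources $\{f_{jk}\}\subset\CI_0((0,T)\times\cO;E)$ whose wave solutions $Wf_{jk}(T,\cdot)$ concentrate at $x$ and, in the limit, evaluate any smooth section at $x$ (this is the bundle analogue of \cite{KOP}). These sequences, and the inner products $\langle Wf(T,\cdot),Wg(T,\cdot)\rangle_{L^2(M;E)}$ computed from $L_{\sD,\cO}^{\mathrm{wave}}$ via Blagovestchenskii, then determine local frames of $E$, the Hermitian metric in those frames, the transition functions on overlaps, and finally the connection one-form $A$ by reading off the first-order part of $(\nabla^E)^*\nabla^E$ acting on reconstructed sections. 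Clifford multiplication is fixed at the end by the compatibility $[\nabla^E,\cl(\theta)]=\cl(\nabla^g\theta)$. No holonomy argument is needed because the bundle is rebuilt directly from the measurements rather than transported.

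In short: your Step 1 is in the right spirit (and close to the paper's heat-kernel reduction), but Steps 2 and 3 outsource the actual work. The paper replaces your trace-and-cite-FGKU step by a bundle-valued boundary control argument, and replaces your parallel-transport globalization by an explicit pointwise reconstruction of $E$ from controlled wave solutions.
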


\begin{corollary}\label{cor}

Let $\sD:=(M, E, g, h_{E}, \nabla^{E})$ be a Dirac bundle over a smooth closed manifold of dimension $m\geq 2$, with $\cO\subset M$ a non-empty open subset. Let $\{\varphi_k\}_{k= 1}^\infty\subset L^2(M; E)$ be the collection of positive eigensections with corresponding eigenvalues $\{\lambda_k\}_{k=1}^\infty\subset \Rp$. Then the partial spectral data plus clifford multiplication $\cl: \Cl(TM,g)\to \End(E)$,
\[ (\cO, \varphi_k|_{\cO}, \lambda_k, \cl|_{T^*\cO})  \]
determines the metric, Hermitian vector bundle, and connection, up to an isometry.
\end{corollary}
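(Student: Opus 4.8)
The plan is to deduce Corollary \ref{cor} from Theorem \ref{main-thm}, by showing that the listed partial spectral data is exactly enough to reconstruct the source-to-solution map $L_{\D,\cO}$; once that is done the hypotheses of the two statements coincide and there is nothing left to prove.

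The first step I would carry out is to recover the geometry on $\cO$ from Clifford multiplication alone. The defining anticommutation relation of a Dirac bundle (\S\ref{dirac-def}) gives $\cl(\xi)\cl(\eta)+\cl(\eta)\cl(\xi)=-2g(\xi,\eta)\,\Id_E$ for cotangent vectors $\xi,\eta$, hence $\cl(\xi)^2=-|\xi|_g^2\,\Id_E$ (for the sign convention of \S\ref{dirac-def}) and $g|_\cO$ is read off, e.g.\ $|\xi|_g^2=-\tr(\cl(\xi)^2)/\rank E$. So $\cl|_{T^*\cO}$ determines $g|_\cO$, hence the Riemannian volume density $dV_g$ on $\cO$. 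The Hermitian form $h_E$ on $E|_\cO$ is already built into the data — it is needed even to speak of the $L^2$-orthonormal sections $\varphi_k|_\cO$ — so for $f\in\CI_0(\cO;E|_\cO)$ I can evaluate the ambient pairing locally, $\langle f,\varphi_k\rangle_{L^2(M;E)}=\int_\cO h_E(f,\varphi_k|_\cO)\,dV_g$.

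Next I would reconstruct $L_{\D,\cO}$ itself. For $f\perp\Ker(\D)$ the spectral theorem gives
\[ L_{\D,\cO}f=(\D^{-\alpha}f)\big|_\cO=\sum_k\lambda_k^{-\alpha}\,\langle f,\varphi_k\rangle_{L^2(M;E)}\,\varphi_k\big|_\cO, \]
and by the previous step every ingredient is computable from $(\cO,\varphi_k|_\cO,\lambda_k,\cl|_{T^*\cO})$; equivalently this data pins down the Schwartz kernel of $L_{\D,\cO}$, which depends only on $\{\lambda_k\}$ and on the restrictions to $\cO$ of the spectral projectors $\sum_{\lambda_j=\lambda}\varphi_j(x)\otimes\overline{\varphi_j(y)}$, and so is insensitive to the basis chosen inside an eigenspace. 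Conversely, if two Dirac bundles share this data — i.e.\ there are a diffeomorphism $\psi\colon\cO_1\to\cO_2$ and a Hermitian bundle isomorphism $\Phi\colon E_1|_{\cO_1}\to E_2|_{\cO_2}$ covering $\psi$, intertwining $\cl_1|_{T^*\cO_1}$ with $\cl_2|_{T^*\cO_2}$ and matching $\{\varphi^1_k|_{\cO_1},\lambda^1_k\}$ with $\{\varphi^2_k|_{\cO_2},\lambda^2_k\}$ — then the displayed series shows immediately that $\psi$ and $\Phi$ intertwine $L_{\D_1,\cO_1}$ and $L_{\D_2,\cO_2}$, since $\Phi$ preserves $h_E$ and covers an isometry (the Clifford relations force $\psi^*g_2|_{\cO_2}=g_1|_{\cO_1}$) and hence preserves each pairing $\langle f,\varphi_k\rangle$ in the sum. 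This is precisely the hypothesis of Theorem \ref{main-thm}, which then yields the corollary.

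The step I expect to be the main obstacle — really the only thing beyond formal manipulation — is the bookkeeping of the spectrum: the corollary records only the positive eigensections and eigenvalues $\{\lambda_k\}\subset\Rp$, while $\D$ is self-adjoint with spectrum accumulating at $\pm\infty$ and $L_{\D,\cO}$ is a priori defined on all of $\Ker(\D)^\perp$, so I must argue that the positive part of the spectral decomposition already determines it (the $\Ker(\D)$ part being irrelevant since $f\perp\Ker(\D)$). When $\D$ is $\bbZ_2$-graded, $E=E^+\oplus E^-$ with $\D$ odd, the chirality involution identifies the $(-\lambda)$- and $\lambda$-eigenspaces, so $\spec(\D)$ is symmetric and the negative eigendata is a formal consequence of the positive; in general the phrase ``partial spectral data'' must be read in light of the sign conventions fixed in \S\ref{dirac-def}. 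Everything else is the eigenfunction expansion above together with a citation of Theorem \ref{main-thm}.
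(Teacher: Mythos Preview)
Your approach is correct and shares the one non-formal ingredient with the paper's own proof: the chirality element $\gamma=i^{\lceil m/2\rceil}\cl(e_1)\cdots\cl(e_m)$ is used to manufacture the negative eigensections from the positive ones via $\D(\gamma\varphi_k)=-\lambda_k\,\gamma\varphi_k$. What you flagged as ``the main obstacle'' is exactly the point the paper treats first, and it resolves it by the same mechanism; you can therefore replace your hedged remark about the $\bbZ_2$-graded case with a direct appeal to $\gamma$, which is available because $\cl|_{T^*\cO}$ is part of the given data.

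The route differs after that. You go straight to the fractional operator: having recovered $g|_\cO$ from the Clifford relation $\cl(\xi)^2=-|\xi|_g^2\Id_E$, you have $dV_g$ on $\cO$, can evaluate the pairings $\langle f,\varphi_k\rangle_{L^2(M;E)}$ for $f\in\CI_0(\cO;E)$, and hence write down $L_{\D,\cO}f=\sum_k\lambda_k^{-\alpha}\langle f,\varphi_k\rangle\,\varphi_k|_\cO$ and invoke Theorem~\ref{main-thm} immediately. The paper instead follows \cite{HLOS}: from the full eigendata on $\cO$ it rebuilds the \emph{wave} source-to-solution map $L^{\text{wave}}_{\sD,\cO}$ (up to a conformal factor in the volume density, which must then be identified separately as in \cite[Thm~6]{HLOS}) and only then appeals to the main theorem. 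Your use of the Clifford relation to pin down $g|_\cO$ directly is a genuine shortcut---it makes the conformal-factor detour unnecessary and lets you bypass the wave equation altogether. The paper's route, on the other hand, makes the connection to the existing scalar machinery of \cite{HLOS} transparent and would survive if $\cl|_{T^*\cO}$ were replaced by weaker local data.
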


\subsection{Applications}
Already there has been some interest in studying generalizations of classical equations of physics with respect to a fractional time derivative, (see for example \cite{Laskin}, \cite{JR}).

In this work we consider instead fractional differential operators, corresponding to fractional spatial derivatives. One place where there has been interest in applying such nonlocal operators is in the study of particle physics beyond the standard model, i.e. fields governed by fractional wave equations. The work of \cite{Zavada} considered $n$th powers for $n\geq 2$ of the d'Alembert operator and demonstrated that for $n>2$ the covariant wave equations generated by $\Box_g^{1/n}$ generate a representation of $SU(n)$. Along similar lines \cite{HERRMANN} developed a form of local gauge invariance for such fractional fields and used this to deduce the Baryon mass spectrum via a fractional extension of the classical Zeeman effect \cite{Zeeman}. For a more recent development of such fractional field theories see the work of \cite{Cuprate} who suggest that anomalous power laws for the ``strange metal" properties of Cuprate can be explained if the metal interacts with light via a gauge theory of fractional dimension; also the work of \cite{LnLP} who introduce a theory of fractional electromagnetism, which is motivated in part by a generalization of the Caffarelli-Silvestre extension to the case of the Hodge Laplacian. 

The most surprising applications relate such nonlocal operators to questions coming from quantum information theory. A crucial first step in studying entanglement properties of algebraic quantum field theories is the Reeh-Schlieder theorem \cite{Witten}, which states all local fields in the field algebra of spacetime are entangled with fields localized to all other regions (c.f. \cite{Strohmaier} for a more mathematical exposition of this theorem in the language of von Neumann algebras). The work of \cite{Verch} first gave a proof of the Reeh-Schlieder theorem for static spacetimes using only the strong anti-locality property of $\Delta_g^{1/2}$. In the work of \cite{GSU} they demonstrate the comparable anti-locality property for $\Delta_g^\alpha$ for $\alpha\in (0,1)$, using standard techniques in inverse problems (Carleman estimates, etc.). That this entanglement property of the field algebra is equivalent to a strong unique continuation principle for certain nonlocal operators suggests other interesting connections between questions in inverse problems and quantum information theory.

\section{Background on Generalized Dirac operators}\label{dirac-def}

Let $(M^m,E^n,h_E,g)$ be a Hermitian vector bundle of rank $n$ over smooth compact Riemannian manifold without boundary. We further call this Hermitian bundle a \emph{Clifford module} if $E\to M$ is bundle over $M$ equipped with bundle morphism, known as \emph{Clifford multiplication},
\[ \cl:\Cl(T^*M, g)\to \End(E)  \]
from the unique Clifford module on $T^*M$ induced by $g$ to $\End(E)$. As a vector bundle, $\Cl(T^*M,g)$ is isomorphic to $\Lambda^\bullet T^*M$, and as a module has an algebra operation determined by the relation 
\[\cl(\alpha)\cl(\beta)+\cl(\beta)\cl(\alpha)=-2g(\alpha,\beta)\Id \quad \forall \alpha,\beta\in \Lambda^\bullet T^*M. \]
We say the Hermitian metric is compatible with Clifford multiplication if 
\[ h_E(\cl(\theta)v,w)+h_E(v,\cl(\theta)w)=0, \quad \forall v,w\in \CI(M;E) \]
and a choice of Hermitian connection $\nabla^E$ on $E$ is is compatible with Clifford multiplication if 
\[ [\nabla^E,\cl(\theta)] = \cl(\nabla^g\theta) \]
where $\nabla^g$ is the Levi-Civita connection of $g$.

Given a Hermitian bundle with connection both compatible with Clifford multiplication we can construct a \emph{generalized Dirac operator} $\D$, defined by
\[ \D: \CI(M; E)\xrightarrow{\nabla^E} \CI(M;T^*M\otimes E)\xrightarrow{i\cl(-)} \CI(M;E),  \]
and call the collected data $(M, E, g, h_E, \nabla^E)$ a \emph{Dirac bundle}. The generalized Dirac operator is a Dirac operator in the usual sense by arising as the `square root' of a generalized Laplace operator on $E$; compatibility of the Hermitian metric and connection implies 
\[ [\D,f]=[i\cl\circ\nabla^E,f]=i\cl(f),\] 
thus $\D$ has principal symbol
\[ \sigma_1(\D)(x,\xi)=i\cl(\xi), \implies \sigma_2(\D^2)(x,\xi)=|\xi|_g^2 \Id_E, \]
hence $\D^2:=\Delta_E$ is a principally scalar multiple of the metric $g$. Further, we see from this computation that both $\D$ and $\D^2$ are principally scalar elliptic differential operators (of orders 1 and 2 respectively).

\section{Fractional Dirac operator and determination of the Heat kernel}

In this section we give two equivalent definitions of the fractional Dirac operator and use one to define the source-to-solution operator associated to an a priori known open set $\cO\subset M$. Then, following \cite{FGKU}, we show that knowledge of this source-to-solution operator determines the Heat kernel on $\cO$.

From the symbol calculation above we see that $\D$ is a symmetric operator. Because $M$ is a closed manifold, $\D:\CI(M;E)\to \CI(M;E)$ is essentially self-adjoint on its core domain of smooth sections with a self-adjoint extension to $H^1(M;E)$. Unlike $\D^2$, $\D$ fails to be a non-negative operator. On the other hand, its discrete spectrum (excluding the zero eigenvalue) is in correspondence with the spectrum of $\D^2$; the discrete eigenvalues of $\D$ come in positive and negative pairs $\{\pm\lambda_k\}$ (corresponding to an eigenvalue $\lambda_k^2$ of $\D^2$) which we index by their absolute values
\[ 0=\lambda_0<|\lambda_1|\leq |\lambda_2|\leq \cdots \nearrow +\infty \]
for the distinct eigenvalues of $\D$, and denote $d_k$ the multiplicity of $\lambda_k$. Let $\{\varphi_{k_j}\}_{j=1}^{d_k}$ be an $L^2(M;E)$-orthonormal basis for the eigenspace $\Ker(\D-\lambda_k)$ corresponding to $\lambda_k$, and denote $\pi_k:L^2(M; E)\to \Ker(\D-\lambda_k)$ for the orthogonal projection onto the corresponding eigenspace, written as
\[ \pi_k(f) = \sum_{j=1}^{d_k} \langle f,\varphi_{k_j}\rangle_{L^2(M;E)}\varphi_{k_j},  \]
for all $k=0,1,\ldots$. Here $\langle \cdot,\cdot\rangle_{L^2(M;E)}$ is the $L^2$-inner product on sections of $E$ induced by our choice of $h_E$.

Fix $\alpha\in (0,1)$. Given this spectral resolution of $\D$ we can define the Fractional Dirac operator $\D^\alpha:\CI(M;E)\to \CI(M;E)$
\[ \D^\alpha f = \sum_{k\in \bbZ} \lambda_k^\alpha \pi_k(f) , \]
which extends to an unbounded self-adjoint operator on $L^2(M;E)$ with domain $\cD(\D^\alpha)=H^\alpha(M;E)$.

Unlike the scalar case, the nullspace of $\D$ may include more than just the constant functions. Say that dim $\Ker(\D)=d_0$ has orthonormal basis $\{\varphi_j^0\}_{j=1}^{d_0}$, then we can solve the equation
\begin{equation}\label{directD}
    \D^\alpha u = f,  
\end{equation}  
for $f\in \CI_0(\cO; E)$, with $\cO\subset M$ open, whenever we impose
\[ \langle f,\varphi_1^0\rangle_{L^2}=\cdots =\langle f, \varphi_{d_0}^0\rangle_{L^2}=0\]
for a unique solution $u=u^f\in \CI(M; E)$ defined by the condition that $u^f\perp_{h_E} \Ker(\D)$. Associated to the equation \eqref{directD} we can define the local source-to-source solution map $L_{M,g,\cO,\ldots}$ by
\[ L_{\D,\cO}(f):=u^f|_{\cO}=(\D^{-\alpha}f)|_{\cO} \]

We can give an equivalent (spectral theoretic) definition for the Fractional Dirac operator via the heat kernel of the square of the Dirac operator. First note that we can define from the integral formula for the Gamma function,
\[ \Gamma(\alpha) = \int_0^\infty e^{-t}t^{\alpha-1}dt, \implies
    \mu^{-\alpha} = \frac{1}{\Gamma(\alpha)}\int_0^\infty e^{-t\mu} t^{\alpha-1}dt, 
\]
thus we write
\[ (\Delta_E)^{-\alpha} = \frac{1}{\Gamma(\alpha)}\int_0^\infty e^{-t\Delta_E} t^{\alpha-1}dt. \]
Using this formula, we have $\D^\alpha = \D\circ \Delta_E^{\frac{\alpha-1}{2}}=\Delta_E^{\frac{\alpha-1}{2}}\D$, i.e. 
\[ \D^\alpha u = \frac{1}{\Gamma(\frac{1-\alpha}{2})}\int_0^\infty t^{\frac{1-\alpha}{2}} e^{-t\Delta_E}(\D u)\frac{dt}{t}   \]
using the fact that $\D e^{-t\Delta_E}=e^{-t\Delta_E}\D$ (from uniqueness of the solution to the heat equation). 

Using this definition of $\D^{-\alpha}$ we can easily extend the proof of \cite[Thm 1.5]{FGKU} to our setting: 
\begin{theorem}
Let $\sD_j:=(M_j, E_j, g_j, h_{E_j}, \nabla^{E_j})$ for $j=1,2$ be two Dirac bundles over closed Riemannian manifolds of dimension $n\geq 2$, then we denote by $\D_j$ the generalized Dirac operator associated to $\sD_j$. Let $\cO_j\subset M_j$ be non-empty open sets and assume that
\begin{equation}\label{opens-equal}
    (\cO_j, g_j|_{\cO_j}):=(\cO, g)  
\end{equation} 
and that there exists hermitian bundle isomorphism $\phi: E_1|_{\cO_1}\to E_2|_{\cO_2}$. Assume furthermore that
\begin{equation}\label{stsm-equal}
    L_{\D_1,\cO_1}(f) = L_{\D_2,\cO_2}(f)
\end{equation} 
for all $f\in \CI_0(\cO_1, E_1|_{\cO})$ such that $f\perp_{L^2(\cO;E_1|_\cO)} \Ker(\D_1)$, for any smooth extension of $f$ to $M_2$. Then 
\[ e^{-t\Delta_{E_1}}(x,y) = e^{-t\Delta_{E_2}}(x,y), \quad x,y\in \cO,\; t>0  \]
\end{theorem}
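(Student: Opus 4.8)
The plan is to transplant \cite[Thm.~1.5]{FGKU} from the scalar Laplacian to the present bundle setting, using the identity $\D_j^{-\alpha}=\D_j\,\Delta_{E_j}^{-(1+\alpha)/2}$ recorded above to exhibit the source-to-solution map as a Mellin transform in time of the heat-smoothed source: for $f\in\CI_0(\cO;E|_\cO)$, so that $\D_j f$ is again supported in $\cO$,
\[
 L_{\D_j,\cO}(f)=\frac{1}{\Gamma(\tfrac{1+\alpha}{2})}\int_0^\infty t^{\frac{1+\alpha}{2}}\bigl(e^{-t\Delta_{E_j}}\D_j f\bigr)\big\rvert_{\cO}\,\frac{dt}{t}.
\]
Using $\phi\colon E_1|_{\cO_1}\to E_2|_{\cO_2}$ and the metric identification \eqref{opens-equal} to identify $L^2(\cO;E_1|_\cO)$ with $L^2(\cO;E_2|_\cO)$ so that the two inner products agree, and noting that $\langle\D_j^{-\alpha}f,g\rangle_{L^2(M_j;E_j)}=\langle L_{\D_j,\cO}(f),g\rangle_{L^2(\cO)}$ when $g\in\CI_0(\cO;E|_\cO)$, hypothesis \eqref{stsm-equal} becomes the equality of the Schwartz kernels, $\D_1^{-\alpha}(x,y)=\D_2^{-\alpha}(x,y)$ for $x,y\in\cO$ — equivalently, the equality on $L^2(\cO;E|_\cO)$ of the compact self-adjoint operators $P_\cO\D_j^{-\alpha}\iota_\cO$, where $\iota_\cO$ is extension by zero and $P_\cO$ restriction — once the finite-dimensional nullspaces are matched. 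This matching is forced: a generalized Dirac operator has the weak unique continuation property, so no nonzero section in $\Ker(\D_j)$ vanishes on the open set $\cO$; with \eqref{stsm-equal} this yields $\dim\Ker(\D_1)=\dim\Ker(\D_2)$ and identifies $\Ker(\D_1)|_\cO$ with $\Ker(\D_2)|_\cO$ up to a unitary, which we absorb into $\phi$.

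Next I would run the spectral-recovery step, which is the core of \cite{FGKU}. Expanding the common kernel as the spectral sum $\D_j^{-\alpha}(x,y)=\sum_{\lambda_k^{(j)}\neq 0}\sign(\lambda_k^{(j)})\,|\lambda_k^{(j)}|^{-\alpha}\,\pi_k^{(j)}(x,y)$ for $x,y\in\cO$, with $\pi_k^{(j)}$ the $\End(E)$-valued kernel of the eigenprojection onto $\Ker(\D_j-\lambda_k^{(j)})$, one regards this as the value at the single exponent $\alpha$ of the holomorphic family $z\mapsto\D_j^{-z}:=\D_j\,\Delta_{E_j}^{-(1+z)/2}$. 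Following the analytic-continuation and unique-continuation argument of \cite{FGKU} — the unique continuation property of $\D_j$ ensuring that every eigenvalue is seen from $\cO$, i.e.\ $\pi_k^{(j)}|_{\cO\times\cO}\neq 0$ — this single datum determines the eigenvalues $\{\lambda_k^{(j)}\}$ together with the restricted eigenprojections $\{\pi_k^{(j)}|_{\cO\times\cO}\}$. By the normalization above these agree for $j=1,2$. Regrouping by the distinct eigenvalues $0=\nu_0<\nu_1<\cdots$ of $\Delta_{E_j}=\D_j^2$, with eigenprojections $Q_\ell^{(j)}=\sum_{|\lambda_k^{(j)}|^2=\nu_\ell}\pi_k^{(j)}$, we obtain $\nu_\ell^{(1)}=\nu_\ell^{(2)}$ and $Q_\ell^{(1)}|_{\cO\times\cO}=Q_\ell^{(2)}|_{\cO\times\cO}$ for every $\ell$. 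Since the Weyl law for $\Delta_{E_j}$ makes
\[
 e^{-t\Delta_{E_j}}(x,y)=\sum_{\ell\geq 0}e^{-t\nu_\ell}\,Q_\ell^{(j)}(x,y),\qquad x,y\in\cO,\ t>0,
\]
absolutely convergent, its right-hand side is independent of $j$, which is the assertion.

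The step I expect to be the main obstacle is the spectral recovery. It cannot be shortcut by iterating the source-to-solution map, since $\bigl(P_\cO\D_j^{-\alpha}\iota_\cO\bigr)^2=P_\cO\D_j^{-\alpha}\mathbf{1}_\cO\D_j^{-\alpha}\iota_\cO$ is \emph{not} $P_\cO\D_j^{-2\alpha}\iota_\cO$ (the intermediate section fails to be supported in $\cO$), so one is genuinely confined to the single exponent $\alpha$ and must pass through the analytic-continuation / unique-continuation mechanism of \cite{FGKU}. Transplanting that mechanism here is, however, only a matter of bookkeeping: eigensections are $E$-valued and eigenprojections $\End(E)$-valued; the eigenvalues of $\D_j$ occur in $\pm$ pairs and must be grouped by their square before the heat kernel of $\Delta_{E_j}$ is read off; and $\Ker(\D_j)$ can be larger than in the scalar case, so the zero modes have to be carried along as above. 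None of this affects the convergence estimates — still governed by the Weyl law for $\Delta_{E_j}$ — nor the unique continuation input, which holds for any generalized Dirac operator; so the scalar argument goes through essentially verbatim.
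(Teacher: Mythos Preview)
Your proposal misidentifies the mechanism of \cite{FGKU} and, as a result, skips the one step that actually does the work. The paper's proof (which follows \cite{FGKU} essentially verbatim) does \emph{not} pass through spectral recovery; eigenvalues and eigenprojections are never extracted. Instead it exploits the locality of $\D$ directly: for $f\in\CI_0(\omega_1;E)$ with $\omega_1\Subset\cO$, the section $\D^{2m-1}f$ is again supported in $\omega_1$ (differential operators preserve support) and, since the two Dirac operators coincide on $\cO$, one has $\D_1^{2m-1}f=\D_2^{2m-1}f=:\D^{2m-1}f$. Feeding these into hypothesis \eqref{stsm-equal} and using your own heat-kernel representation of $L_{\D_j,\cO}$ together with $e^{-t\Delta_{E_j}}\Delta_{E_j}^m=(-\pa_t)^m e^{-t\Delta_{E_j}}$ gives, after $m$ integrations by parts (the boundary terms vanish by standard short- and long-time heat-kernel bounds for $\Delta_{E_j}$),
\[
\int_0^\infty t^{\frac{\alpha-1}{2}-m}\bigl((e^{-t\Delta_{E_1}}-e^{-t\Delta_{E_2}})f\bigr)(x)\,\frac{dt}{t}=0,\qquad m=1,2,\ldots,
\]
for $x$ in a disjoint open set $\omega_2\subset\cO$. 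This is a Mellin transform vanishing along an arithmetic progression; analyticity forces the integrand to vanish identically, and unique continuation for the heat equation upgrades $\omega_2$ to all of $\cO$.

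Your dismissal of ``iteration'' targets the wrong object. One does not compute $(P_\cO\D_j^{-\alpha}\iota_\cO)^2$; one applies $L_{\D_j,\cO}$ to the \emph{locally} produced sources $\D^{2m-1}f$, which is legitimate precisely because $\D$ is a differential operator and the two operators agree on $\cO$. That pre-composition with local powers \emph{is} the ``analytic-continuation mechanism'' you defer to --- there is no separate spectral-recovery step in \cite{FGKU}. Without it, your proposed extraction of $\{\lambda_k\}$ and $\{\pi_k|_{\cO\times\cO}\}$ from the single restricted kernel $\D^{-\alpha}|_{\cO\times\cO}$ does not go through: the restrictions $\varphi_k|_\cO$ are not orthogonal in $L^2(\cO;E|_\cO)$, so $P_\cO\D^{-\alpha}\iota_\cO$ does not diagonalize in any basis that sees the global eigenvalues, and a holomorphic family evaluated at a single point carries no information about its other values.
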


\textbf{Remark}: The requirement for well-posed of the source-to-source solution maps that $f$ be orthogonal to the space Harmonic sections is subtle: the local bundle isomorphism $\phi$ is already sufficient to ensure that $\text{dim}\;\Ker(\D_1)=\text{dim}\Ker(\D_2)$. If $f\in \Ker(\D_2)$ then its restriction to $\cO_2$ pullbacks to a section of $E_1|_{\cO_1}$, and in particular is also an element of $\Ker(\D_1)$ as the Dirac operator commutes with pullback. Similarly pullback by the inverse proves the reverse inclusion. The possibility that a non-trivial Harmonic section vanishes on the open sets in consideration is ruled out by the unique continuation principle for the Dirac operator: if $\D f=0$ and $f|_\cO=0$ then $f=0$ (see e.g. \cite[Ch. 8]{BBW}).

\begin{proof}
Choosing $\omega_1\Subset \cO$ non-empty and open, with $\omega_2\subset \cO$ also non-empty open such that $\overline\omega_1\cap \overline\omega_2=\emptyset$. For $f\in \CI_0(\cO; E|_{\cO})$, due to \eqref{opens-equal}, we have for all $m=1,2,\ldots$,
\[ \D_1^{2m-1}f=\D_2^{2m-1}f=\D^{2m-1}f \quad \text{on $\omega_1$}, \]
and $\D^{2m-1}f$ is orthogonal to $\Ker(\D_j)$ for $j=1,2$. Further from \eqref{stsm-equal}, for all $m=1,2,3,\ldots$ that
\[ (\D_1^{-\alpha} \D^{2m-1}f)|_{\cO}=(D_2^{-\alpha} \D^{2m-1}f)|_{\cO}, \]
hence 
\begin{align*}
    \int_0^\infty t^{\frac{\alpha-1}{2}}((e^{-t\Delta_{E_1}}\D_1 - & e^{-t\Delta_{E_2}}\D_2 )\D^{2m-1}f)(x) \frac{dt}{t} = \int_0^\infty t^{\frac{\alpha-1}{2}}((e^{-t\Delta_{E_1}} - e^{-t\Delta_{E_2}} )\Delta^{m}f)(x) \frac{dt}{t} =0 \\
    & =\int_0^\infty t^{\frac{\alpha-1}{2}} \pa_t^m ((e^{-t\Delta_{E_1}} - e^{-t\Delta_{E_2}})f)(x) \frac{dt}{t} 
\end{align*} 
for $x\in \cO,\; m=1,2,\ldots$, where we have again used \eqref{opens-equal} and that $\D^2=\Delta_E$. We next aim to integrate by parts $m$-times, and observe that the boundary terms vanish due to a combination of classical Heat kernel estimates (both short and long time). The relevant Heat kernel estimates which hold for the Heat kernel of the Dirac Laplacian $\Delta_{E_j}$, are
\[ |e^{-t\Delta_{E_j}}(x,y)|\leq Ct^{-(m+1)}e^{-\frac{cdg_{j}(x,y)^2}{t}}, \quad 0<t<1,\; x,y\in M_j  \]
(has a slightly worse exponent in $t$ than what is known for the scalar case, but still suffices for our purposes), and 
\[ ||e^{-t\Delta_{E_j}}||_{L^1\to L^\infty}\leq C t^{-m/2}, \quad t>0 \]
which is precisely the classical estimate of \cite{Varopoulos}, equivalent to the scalar case. The first estimate can be found in \cite[Thm 3.5]{Ludewig}. Combining these two estimates, we can bound the integrand above, $|\pa_t((e^{-t\Delta_{E_1}} - e^{-t\Delta_{E_2}})f)|(x)$, by functions vanishing at the endpoints of the integral. Proceeding to integrate by parts $m$-times we have that 
\[ \int_0^\infty t^{\frac{\alpha-1}{2}-(m+1)}((e^{-t\Delta_{E_1}}-e^{-t\Delta_{E_2}})f)(x)dt=0  \]
and from here we can conclude that $\cF(\chi_{[0,\infty)}\varphi)(s)$ is holomorphic with all derivatives vanishing at $s=0$, for 
\[ \varphi(s)=\frac{((e^{s\Delta_{E_1}}-e^{s\Delta_{E_2}})f)(x)}{s^\alpha},  \]
thus $((e^{-t\Delta_{E_1}}-e^{-t\Delta_{E_2}})f)(x)=0$ for all $x\in \omega_2$ and all $t>0$. By unique continuation of the Heat equation this implies the same equality holds on all of $\cO$. Using that $f\in \CI_0(\omega_1)$ was arbitrary we have
\[ e^{-t\Delta_{E_1}}(x,y)=e^{-t\Delta_{E_2}}(x,y), \quad x,y\in \cO, t>0 \]
as claimed.
\end{proof}

\section{Reconstruction via the Wave equation}

Have shown that the source-to-source solution operators determine the Heat kernel on the open neighborhood $\cO$. Using the Kannai transmutation formula
\[ e^{-t\Delta_{E_j}}(x,y) = \frac{1}{(4\pi t)^{1/2}t}\int_0^\infty e^{-\frac{\tau}{4t}} \frac{\sin(\sqrt{\tau}\sqrt{\Delta_{E_j}})}{\sqrt{\Delta_{E_j}}} d\tau \quad t>0 \]
the local determination of $e^{-t\Delta_{E_j}}(x,y)$ and this equality imply 
\begin{equation}\label{sinc-equal}
     \left(\frac{\sin(t\sqrt{\Delta_{E_1}})}{\sqrt{\Delta_{E_1}}}f\right)(x) = \left(\frac{\sin(t\sqrt{\Delta_{E_2}})}{\sqrt{\Delta_{E_2}}}f\right)(x) ,  
\end{equation} 
for all $t>0$ and $x\in \cO$. 

The benefit of this last equality is that the operator in \eqref{sinc-equal} lets us represent the solution of the initial value problem,
\begin{equation}\label{direct-wave-eqn}
    \begin{cases} (\pa_t^2 - \Delta_{E_j})u_j(t,x)=f(t,x), & (t,x)\in (0,\infty)\times M_j \\ u_j(0,x)=0 , \pa_tu_j(0,x)=0 & x\in M_j \\ \end{cases}
\end{equation}   
for data $f\in \CI_0(\cO\times M_j; E_j)$ via the formula
\[ u_j(t,x) = \int_0^t \frac{\sin(t-s)\sqrt{\Delta_{E_j}}}{\sqrt{\Delta_{E_j}}} f ds.  \]
Associated to \eqref{direct-wave-eqn}, we define the source-to-source solution map for the wave operator on $\cO$ by 
\[ L_{\sD_j,\cO_j}^\text{wave}: \CI_0((0,\infty)\times \cO)\to \CI_0([0,\infty)\times \cO), \quad L_{\sD_j, \cO_j}^\text{wave}(f)=u_j|_{\cO_j}  \]
where $u_j(t,x)$ is the unique solution to \eqref{direct-wave-eqn}. And by the equality in \eqref{sinc-equal} we have equality of the source-to-solution maps for the wave operators for $j=1,2$,
\[ L_{\sD_1,\cO_1}^{\text{wave}}(f) = L_{\sD_2,\cO_2}^{\text{wave}}(f) .  \]
Now we can move to the statement of our main theorem.

\begin{theorem}

Let $\sD_j:=(M_j, E_j, g_j, h_{E_j}, \nabla^{E_j})$ for $j=1,2$ be two Dirac bundles over closed Riemannian manifolds of dimension $n\geq 2$, and let $\cO_j\subset M_j$ be non-empty open sets. Assuming there exists a diffeomorphism $\psi:\cO_1\to\cO_2$ satisfying 
\[ L_{\sD_1,\cO_1}^{\text{wave}}(\psi^*f) =\psi^*(L_{\sD_2,\cO_2}^{\text{wave}}f), \quad \forall f\in \CI_0((0,\infty)\times \cO_2;E).  \]
Then there is an isomorphism of Hermitian vector bundles $\Psi:E_1\to E_2$, covering an isometry $\psi:M_1\to M_2$ which coincides with $\Psi|_{E_1|_{\cO_1}}=\psi$.
\end{theorem}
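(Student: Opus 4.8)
The plan is to run the boundary control (BC) method of Belishev--Kurylev in its interior-observation form, exactly as in \cite{FGKU} for the scalar fractional Laplacian, but now for the bundle-valued wave equation governed by the Dirac Laplacian $\Delta_{E_j}=\D_j^2$. The three analytic ingredients are a Blagoveshchenskii-type identity, which turns the wave source-to-solution data into $L^2(M_j;E_j)$ inner products of wave solutions; Tataru's unique continuation theorem, which yields approximate controllability of waves by sources supported in $\cO$; and the progressing-wave (geometric optics) expansion for $\pa_t^2-\Delta_{E_j}$, whose leading term propagates the polarization of a wave by $\nabla^{E_j}$-parallel transport. Throughout one uses that $\Delta_{E_j}$ has scalar principal symbol $|\xi|_{g_j}^2\,\Id_{E_j}$, so the equation has unit propagation speed and Tataru's theorem applies to the bundle-valued equation without change. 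A preliminary reduction: comparing the short-time Taylor expansions in $t$ of the two hypothesized identities forces the two Dirac Laplacians to agree as operators over $\cO$ after identification by $\psi$ together with a bundle isomorphism; reading off the principal and subprincipal parts, we may therefore assume $\psi|_{\cO}:\cO_1\to\cO_2$ is an isometry covered by a Hermitian bundle isomorphism $\phi:E_1|_{\cO_1}\to E_2|_{\cO_2}$ under which the metrics, connections, and zeroth-order (Weitzenb\"ock) potentials all agree over $\cO$.

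\emph{Recovery of $(M,g)$.} For $f,h\in\CI_0((0,T)\times\cO;E|_\cO)$ one expresses $\langle u_j^f(T,\cdot),u_j^h(T,\cdot)\rangle_{L^2(M_j;E_j)}$ as a fixed bilinear functional of $L^{\text{wave}}_{\sD_j,\cO_j}$ (integrate the equation for $u^f$ against $u^h$ over $(0,T)\times M_j$, integrate by parts in space and time, and use oddness of the reflected data); by hypothesis these functionals agree for $j=1,2$. Finite speed of propagation places $u_j^f(T,\cdot)$ inside the domain of influence $M_j(\cO,T)=\{x:d_{g_j}(x,\cO)<T\}$, while Tataru's theorem together with a Hahn--Banach argument shows that $\{u_j^f(T,\cdot):f\in\CI_0((0,T)\times\cO;E|_\cO)\}$ is dense in $L^2(M_j(\cO,T);E_j)$. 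Feeding the inner-product data through the standard BC bookkeeping---cutting off in $\cO$ and in time, and comparing norms of waves to detect growth of the sets $M_j(\Gamma,\tau)$---one recovers the volumes of all domains of influence and hence the distance functions $r_x^{(j)}:=d_{g_j}(x,\cdot)|_{\overline{\cO}}$ for every $x\in M_j$; the map $x\mapsto r_x^{(j)}$ embeds $M_j$ in $C(\overline\cO)$ and its image is determined by the data. The reconstruction of a Riemannian manifold from its family of distance functions to the observation region, carried out in \cite{FGKU}, then recovers the smooth structure and metric of $M_j$; since the two recovered subsets of $C(\overline\cO)$ coincide and each contains $\cO$ as the distinguished subfamily $\{r_z:z\in\overline\cO\}$, one obtains an isometry $M_1\to M_2$ extending $\psi|_\cO$, which we again denote $\psi$.

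\emph{Recovery of $(E,h_E,\nabla^E)$.} With the Riemannian manifolds identified, the wave data determines $\Delta_{E_j}$, hence the kernel of $\tfrac{\sin t\sqrt{\Delta_{E_j}}}{\sqrt{\Delta_{E_j}}}$ on $\cO$, and in particular the leading singularity---at first-arrival time $t=d(z,x)$---of the wave produced by a point source $\delta_z\otimes v$ with $z\in\cO$ and $v\in E_{j,z}$. The progressing-wave expansion identifies this leading term with $\Lambda_\gamma\,P^{(j)}_\gamma(v)$, where $\gamma$ is the minimizing geodesic from $z$ to $x$, $P^{(j)}_\gamma:E_{j,z}\to E_{j,x}$ is $\nabla^{E_j}$-parallel transport, and $\Lambda_\gamma$ is the scalar half-density amplitude already known from $(M,g)$; equivalently the $P^{(j)}_\gamma$ may be extracted by the focusing-of-waves construction of the BC method. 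Thus the data determines all geodesic parallel transports from $\cO$; being $h_{E_j}$-isometries that are mutually consistent, they present each fiber $E_{j,x}$, together with its Hermitian inner product, as a quotient of $\bigsqcup_{z\in\cO}E_{j,z}$, while differentiating $P^{(j)}_\gamma$ along geodesics recovers $\nabla^{E_j}_v$ for $v$ in an open cone of directions at each point off the cut locus of $\cO$, hence---by linearity in $v$ and continuity---the whole connection $\nabla^{E_j}$ (the zeroth-order potential then drops out of the subleading terms, as the final step of the BC method). Gluing $P^{(1)}_\gamma\circ\phi\circ(P^{(2)}_\gamma)^{-1}$ over a neighborhood of $\cO$ and propagating outward produces a Hermitian bundle isomorphism $\Psi:E_1\to E_2$ covering the isometry $\psi$, intertwining the connections, and restricting to $\phi$ over $\cO$; well-definedness at points reached by several minimizing geodesics holds because one and the same source-to-solution map produces both families $P^{(1)}_\gamma$ and $P^{(2)}_\gamma$.

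\emph{The main obstacle.} The Blagoveshchenskii identity, the controllability statement, and the metric reconstruction are essentially the scalar arguments of \cite{FGKU}; the genuinely new work---and the principal difficulty---is the bundle reconstruction. One must (a) make the progressing-wave expansion for the Dirac Laplacian precise enough that ``leading singularity of the wave at first arrival'' is an honest functional of $L^{\text{wave}}_{\sD_j,\cO_j}$ and coincides with parallel transport times the known scalar amplitude, and (b) handle points of $M_j$ on the cut locus of $\cO$, or reachable only by non-unique minimizing geodesics, where the clean first-arrival picture degrades, so that the fiber, the connection and the identification $\Psi$ there must be obtained by continuity from regular points together with a density argument---in exact parallel with the metric reconstruction at such points. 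A recurring bookkeeping subtlety, shared by all partial-data BC arguments, is that every quantity extracted along the way must be exhibited as a functional of $L^{\text{wave}}_{\sD_j,\cO_j}$ alone, which is why one works throughout with inner products of $\cO$-supported waves rather than with the global solution operator. (If, as in Corollary~\ref{cor}, the Clifford multiplication over $\cO$ is also part of the data, then since $\cl_j$ is parallel for the compatible connections it is transported consistently by $\Psi$, which therefore also intertwines the Clifford module structures, and hence the Dirac operators themselves.)
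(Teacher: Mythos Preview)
Your overall architecture---Blagoveshchenskii identity, Tataru unique continuation, approximate controllability, and reconstruction of the metric from the family of boundary distance functions---matches the paper's, and the metric recovery goes through essentially as you say (the paper attributes this step to \cite{HLOS} rather than \cite{FGKU}, but the content is the same). The genuine divergence is in how you recover the bundle and connection. You propose reading off $\nabla^{E}$-parallel transport from the leading singularity of the progressing-wave expansion for $\pa_t^2-\Delta_E$, then assembling $(E,h_E,\nabla^E)$ from the family of parallel transport maps $P_\gamma$ out of $\cO$. The paper instead follows the Kurylev--Oksanen--Paternain route: it uses approximate controllability to build, via explicit double sequences $\{f_{jk}\}$, pointwise evaluations $e(x;\Phi^x)\in E_x$ and hence local orthonormal frames; this determines the transition functions and $h_E$ directly. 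For the connection, the paper observes that once $Wf(T,\cdot)$ is known, so is $\Delta_E Wf(T,\cdot)=\pa_t^2 Wf(T,\cdot)$, and then extracts the connection one-form $A$ from the first-order part of $\Delta_E$ in a local trivialization by testing against sections with prescribed $1$-jets. Your geometric-optics approach is conceptually clean and correctly identifies its own main obstacle (making the leading amplitude an honest functional of the data and handling the cut locus); the paper's frame-and-operator approach sidesteps both issues, since the double-sequence construction is purely $L^2$-based and pointwise, and reading $A$ off from $\Delta_E$ requires no propagation at all. Either method is adequate here, but the paper's is closer in spirit to \cite{KOP} and avoids the microlocal bookkeeping your plan would require.
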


We follow the proof of \cite{HLOS}, (which itself was an application to closed manifolds of the boundary control method of Belishev \cite{Bel} combined with the crucial unique continuation method of Tataru \cite{Tataru}) extended to the bundle case. The general structure of the proof is as follows 
\begin{enumerate}
    \item The source-to-source solution operator $(\cO,L_{\sD,\cO}^{\text{wave}})$ determines the distance function $d_g$ on $\cO\times \cO$
    \item The \emph{source-to-source solution data} $(\cO, g|_\cO, h_E|_{\cO}, L_{\sD, \cO}^\text{wave})$ determines the \emph{distance data} $(\cO, g|_{\cO}, R(M))$ 
    \item The \emph{distance data} $(\cO, g|_{\cO}, R(M))$ determines the topology, smooth structure and Riemannian structure of $(M,g)$
    \item {\color{red}New:} The distance data determines the isomorphism class of $E\to M$ and Hermitian metric $h_E$
    \item {\color{red}New:} The distance data determines the Hermitian connection $\nabla^E$, and thus the homomorphism of clifford multiplication from the identity $[\nabla^E,\cl(\theta)]=\cl(\nabla^g\theta)$
\end{enumerate}

This uses several facts about the solution to the linear wave equation on sections of $E$: the first is its finite speed of propagation.

\begin{theorem}

Let $T>0$, and $p\in M$ be open and define the open cone
\[ C_{T,p}:=\{(t,x)\in (0,T)\times M: d_g(x,p)< T-t \} .  \]
Let $f\in L^2(\bbR\times M; E)$ and suppose $u$ solves 
\[ \begin{cases}
(\pa_t^2-\Delta_E)u = f & (0,\infty)\times M \\
f|_{C_{T,p}}= 0 \\
u|_{\{t=0\}\times B_T(p)}=\pa_tu|_{\{t=0\}\times B_T(p)} =0
\end{cases}   \]
then $u|_{C_{T,p}}=0$.
\end{theorem}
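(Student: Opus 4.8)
The plan is to run the standard energy argument for finite speed of propagation; the only thing to verify is that passing from scalar functions to sections of $E$ costs nothing. Two facts from \S\ref{dirac-def} make this automatic. First, $\Delta_E=\D\circ\D$ is principally scalar, with principal symbol $|\xi|_g^2\Id_E$ (up to the sign convention making $\pa_t^2-\Delta_E$ hyperbolic), so the characteristic set of $\pa_t^2-\Delta_E$ is exactly the metric light cone of $g$. Second, by the Weitzenb\"ock–Lichnerowicz identity $\Delta_E$ equals the connection Laplacian $\nabla^{E*}\nabla^E$ of $\nabla^E$ modulo a bundle endomorphism $\mathcal K\in\CI(M;\End E)$, which is bounded since $M$ is compact. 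Below, $\langle\cdot,\cdot\rangle$ denotes the pointwise $h_E$–inner product (tensored with $g$ on $E$–valued $1$–forms) and $\nu$ the outward unit normal to a geodesic sphere.

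First I would introduce the modified local energy $\tilde e(t,x)=\tfrac12\bigl(|\pa_tu|^2+|\nabla^Eu|^2+|u|^2\bigr)$ and set $\cE(t)=\int_{B_{T-t}(p)}\tilde e(t,x)\,\dvol_g(x)$ for $t\in[0,T)$; the extra $|u|^2$ is included so that the zeroth–order term $\mathcal K u$ can be absorbed later. The Cauchy hypotheses force $\cE(0)=0$: on $\{t=0\}\times B_T(p)$ one has $u=0$ and $\pa_tu=0$, and since $u$ vanishes identically on the slice $\{t=0\}\times B_T(p)$ its spatial covariant derivative $\nabla^Eu$ vanishes there as well.

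Next I would differentiate $\cE$. Using the Leibniz rule for an integral over a contracting domain together with $|\nabla d_g|=1$ a.e., one gets $\tfrac{d}{dt}\cE(t)=\int_{B_{T-t}(p)}\pa_t\tilde e\,\dvol_g-\int_{\pa B_{T-t}(p)}\tilde e\,dS_g$. Expanding $\pa_t\tilde e=\Re\langle\pa_t^2u,\pa_tu\rangle+\Re\langle\nabla^E\pa_tu,\nabla^Eu\rangle+\Re\langle u,\pa_tu\rangle$, integrating the middle term by parts over $B_{T-t}(p)$, and substituting $\pa_t^2u=\Delta_Eu+f$ with $\Delta_E=\nabla^{E*}\nabla^E+\mathcal K$, the second–order bulk contributions cancel, leaving
\[ \tfrac{d}{dt}\cE(t)=\int_{B_{T-t}(p)}\Re\langle f,\pa_tu\rangle\,\dvol_g+\int_{B_{T-t}(p)}O(\tilde e)\,\dvol_g+\int_{\pa B_{T-t}(p)}\bigl(\Re\langle\pa_tu,\nabla^E_\nu u\rangle-\tilde e\bigr)\,dS_g. \]
Three observations finish the estimate: (i) for $t\in(0,T)$ every point of $B_{T-t}(p)$ lies in $C_{T,p}$, where $f\equiv0$, so the first integral vanishes; (ii) the second integrand is pointwise $\le C\tilde e$ (boundedness of $\mathcal K$, plus Cauchy–Schwarz on the cross term with $|u|^2$), so that integral is $\le C\cE(t)$; (iii) on $\pa B_{T-t}(p)$, Cauchy–Schwarz gives $|\Re\langle\pa_tu,\nabla^E_\nu u\rangle|\le\tfrac12|\pa_tu|^2+\tfrac12|\nabla^E_\nu u|^2\le\tilde e$, so the boundary integrand is $\le0$. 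Point (iii) is precisely where the unit slope of the cone — matching the unit propagation speed — is used. Hence $\tfrac{d}{dt}\cE(t)\le C\cE(t)$ on $[0,T)$ with $\cE(0)=0$, $\cE\ge0$, so Gr\"onwall gives $\cE\equiv0$ there; therefore $\pa_tu\equiv0$ and $u\equiv0$ on $C_{T,p}$, the latter by integrating $\pa_tu=0$ in $t$ from the vanishing Cauchy data.

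The hard part is not this computation but the low regularity: with $f$ only in $L^2$, $u$ is a priori just a finite–energy weak solution, so I would justify the integrations by parts and the boundary terms by mollifying in $t$ (using the uniform energy bound) and passing to the limit, or by working directly with the Lipschitz function $x\mapsto d_g(x,p)$ in place of smooth geodesic spheres, the cut locus being negligible for $\dvol_g$ and for the relevant surface measures. The only genuinely new point relative to the scalar case is that the Weitzenb\"ock remainder $\mathcal K$ is honestly zeroth order for the \emph{generalized} Dirac operator, which is where the compatibility of $h_E$ and $\nabla^E$ with Clifford multiplication from \S\ref{dirac-def} enters.
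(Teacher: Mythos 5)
The paper gives no proof of this theorem: finite speed of propagation for the principally scalar wave operator $\pa_t^2-\Delta_E$ is invoked as standard (the only references the paper supplies in this neighbourhood concern the unique continuation step and the Carleman estimates of \cite{EINT}). Your energy argument therefore does not parallel anything in the paper; it supplies a proof the paper elides. That said, it is the right one, and the bundle-specific points you isolated are exactly the ones worth flagging. Two small remarks. First, the sign convention: as you noted, $\sigma_2(\Delta_E)=|\xi|_g^2\Id_E$ makes $\pa_t^2-\Delta_E$ elliptic rather than hyperbolic, and the paper silently uses the convention in which $\Delta_E$ is the negative Bochner Laplacian plus curvature; your caveat is correct and worth keeping explicit, since otherwise the bulk second-order terms in your integration by parts reinforce rather than cancel. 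Second, you invoke the Weitzenb\"ock identity to make the lower-order remainder an endomorphism $\mathcal K$, but this is slightly more than you need: a first-order remainder $R$ with bounded coefficients could also be absorbed via Cauchy--Schwarz against $|\nabla^E u|^2+|\pa_tu|^2$, so the argument in fact establishes finite propagation speed for any operator $\nabla^{E*}\nabla^E+R$ with $R\in\Diff^1$ and bounded coefficients, with the Clifford compatibility conditions merely specializing $R$ to order zero. The handling of the $|u|^2$ augmentation, the strict inequality on the boundary of the shrinking balls, and the mollification remark for $L^2$ data are all in order.
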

To give the statement of the relevant unique continuation principle   we also define
\[ M(T,\cO)=\{x\in M: d_g(x,\cO)\leq T\}  \]
for the domain of dependence of the wave equation. As a result of the Carleman estimates established in \cite{EINT}, the proof of the unique continuation theorem is as in the scalar case since the wave equation being considered is principally scalar (i.e. $\pa_t^2 - \Delta_E$) see section 2.5 of \cite{KKI},

\begin{theorem}\label{uniqcont}

Let $T>0$, and $\cO\subset M$ be open and bounded. Let $u\in \CI_0(\bbR\times M; E|_{\cO})$. Let 
\[ \begin{cases}
(\pa_t^2-\Delta_E)u=0 & (0,2T)\times M(T,\cO) \\ u|_{(0,2T)\times \cO}\equiv 0
\end{cases}\] 
Then $u|_{C(T,\cO)}\equiv 0$, for 
\[ C(T,\cO) = \{(t,x)\in (0,2T)\times N : d_g(x,\cO)\leq \min\{t,2T-t\}\} \]
\end{theorem}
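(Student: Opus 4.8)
The plan is to reduce Theorem~\ref{uniqcont} to the scalar unique continuation theorem for the wave operator, using that $\pa_t^2-\Delta_E$ is principally scalar, and then to run the standard Carleman-estimate-and-foliation argument verbatim. First I would cover $M(T,\cO)$ by finitely many coordinate charts over which $E$ is trivialized by a local $h_E$-orthonormal frame. In such a chart $\Delta_E = -g^{ij}\pa_i\pa_j + b^i\pa_i + c$, with $b^i$ and $c$ matrix-valued functions of $x$ alone, built from the Christoffel symbols of $g$ and the coefficients of $\nabla^E$ and its curvature. Hence $\pa_t^2-\Delta_E$ has principal symbol $(\tau^2-|\xi|_g^2)\,\Id_{\bbC^n}$, a diagonal multiple of the scalar Lorentzian symbol, and its coefficients do not depend on $t$. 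This last point is essential: $t$-independence is exactly the partial analyticity hypothesis required to invoke Tataru's sharp unique continuation theorem \cite{Tataru}, so no real-analyticity of $g$ is needed.

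Next I would invoke the Carleman estimates of \cite{EINT}: for a weight $\phi$ strongly pseudoconvex with respect to $\pa_t^2-\Delta_g$, and for scalar $v$ supported in a small set with $\tau$ large, one has an estimate of the form
\[
\sum_{|\beta|\le 1}\tau^{3-2|\beta|}\,\norm{e^{\tau\phi}\,\pa^\beta v}_{L^2}^2 \;\lesssim\; \norm{e^{\tau\phi}(\pa_t^2-\Delta_g)v}_{L^2}^2 .
\]
Applying this componentwise to $\bbC^n$-valued $v$ and using
\[
\norm{e^{\tau\phi}(\pa_t^2-\Delta_E)v}_{L^2} \;\ge\; \norm{e^{\tau\phi}(\pa_t^2-\Delta_g)v}_{L^2} - C\big(\norm{e^{\tau\phi}\,dv}_{L^2}+\norm{e^{\tau\phi}v}_{L^2}\big),
\]
the contributions of the bounded lower-order terms $b^i\pa_i+c$ are absorbed into the left-hand side once $\tau$ is large. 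This yields the identical Carleman estimate for the bundle operator $\pa_t^2-\Delta_E$ on sections of $E$.

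From here the argument is word-for-word the scalar one. A routine cutoff argument converts the Carleman inequality into local unique continuation across a pseudoconvex level set: if $(\pa_t^2-\Delta_E)u=0$ near a point $z_0$ and $u$ vanishes on $\{\phi<\phi(z_0)\}$ near $z_0$, then $u\equiv 0$ near $z_0$. Then, as in \cite[\S2.5]{KKI} and \cite{HLOS}, one foliates $C(T,\cO)$ by a smooth family of such hypersurfaces emanating from the cylinder $(0,2T)\times\cO$ --- where $u\equiv 0$ by hypothesis --- and a connectedness argument propagates the vanishing set across the whole family. Since the pseudoconvex functions involved are determined entirely by the characteristic cone of $\pa_t^2-\Delta_g$, the region one reaches is precisely $\{(t,x)\in(0,2T)\times M : d_g(x,\cO)\le\min\{t,2T-t\}\}$, i.e. $C(T,\cO)$.

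The only non-formal point is the claim that the bundle terms are harmless, and this is exactly what the large parameter $\tau$ buys: the connection and curvature coefficients enter only to first order with $x$-dependent, hence bounded, coefficients, so they are dominated by the $\tau$-gain in the Carleman estimate. Everything concerning pseudoconvexity of the weights and the geometry of the optimal double cone depends only on the scalar principal symbol $\tau^2-|\xi|_g^2$ and so is imported unchanged from the scalar theory.
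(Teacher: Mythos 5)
Your proposal follows the same route the paper takes: the paper's proof of Theorem~\ref{uniqcont} is a one-line pointer to the Carleman estimates of \cite{EINT} and to \cite[\S2.5]{KKI}, with the explanatory remark that $\pa_t^2-\Delta_E$ is principally scalar, and you have simply unpacked that pointer. Your key observation --- that in a local $h_E$-orthonormal frame the operator is the scalar wave operator plus $t$-independent, bounded matrix-valued lower order terms, which are absorbed by the $\tau$-gain on the left of the Carleman inequality --- is exactly what makes the reduction to \cite{EINT} go through, and your appeal to Tataru's theorem via the $t$-independence (``partial analyticity'') hypothesis is the correct reason one obtains the optimal double cone $C(T,\cO)$ rather than a smaller region.

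One small technical inaccuracy worth flagging: the Carleman estimate you display, with a weight $\phi$ that is \emph{strongly pseudoconvex} for $\pa_t^2-\Delta_g$, cannot by itself propagate vanishing all the way across $C(T,\cO)$. The level sets one needs to cross to reach the optimal cone are characteristic (lightlike), and no strongly pseudoconvex weight exists there; this is precisely the obstruction Tataru's theorem overcomes. The estimates actually used in \cite{EINT} are of Tataru's FBI/Gaussian-regularized type, in which the conjugated operator $e^{-\eps|D_t|^2/2\tau}e^{\tau\phi}(\pa_t^2-\Delta_E)e^{-\tau\phi}$ appears and the weight is pseudoconvex only in the non-analytic (spatial) directions. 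The good news is that the absorption of the bundle lower-order terms $b^i\pa_i+c$ works identically for that estimate, since those terms are still $t$-independent and subprincipal, so your reduction survives intact once the displayed estimate is replaced by the correct Tataru/EINT one. As written, though, there is a tension between citing Tataru (which you correctly identify as essential) and then writing a classical pseudoconvex Carleman inequality, and a referee would ask you to reconcile the two.
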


Given the proper form of unique continuation we should be able to prove density of solutions with sources from the set,
\[ \cF(T,\cO) = \{ f\in \CI_0(\bbR\times M; E) | \supp(f)\subset (0,T)\times \cO \}  \]

\begin{theorem}[Approximate controllability]\label{approx-contr}
Let $\cO\subset M$ be open and bounded. For $T>0$ the set
\[ \cW_T=\{ Wf(T,\cdot): f\in \cF(T,\cO) \}   \]
is dense in $L^2(M(T,\cO);E)$. Further, by considering time reparametrization, we obtain that $\{Wf(T,\cdot): f\in \CI_0((T-r,T)\times \cO; E)$ is dense in $L^2(M(r,\cO);E)$ for all $r>0$.
\end{theorem}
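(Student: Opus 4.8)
The plan is to establish density by duality combined with Tataru's unique continuation principle (Theorem~\ref{uniqcont}), carrying the classical boundary-control argument into the bundle-valued setting. First I would observe, by finite speed of propagation, that every $Wf(T,\cdot)$ with $f\in\cF(T,\cO)$ is supported in $M(T,\cO)$, so $\cW_T\subset L^2(M(T,\cO);E)$ and it suffices to show that any $\psi\in L^2(M(T,\cO);E)$ (extended by zero to $M$) with $\langle Wf(T,\cdot),\psi\rangle_{L^2(M;E)}=0$ for all $f\in\cF(T,\cO)$ must vanish.

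Using the representation $Wf(T,\cdot)=\int_0^T \frac{\sin((T-s)\sqrt{\Delta_E})}{\sqrt{\Delta_E}}f(s,\cdot)\,ds$ together with the self-adjointness of the bounded operators $\frac{\sin(r\sqrt{\Delta_E})}{\sqrt{\Delta_E}}$, the orthogonality condition becomes $\int_0^T\langle f(s,\cdot),v(s,\cdot)\rangle_{L^2(M;E)}\,ds=0$, where $v(s,\cdot):=\frac{\sin((T-s)\sqrt{\Delta_E})}{\sqrt{\Delta_E}}\psi$ is the finite-energy solution of $(\partial_t^2-\Delta_E)v=0$ on $\bbR\times M$ with terminal data $v(T,\cdot)=0$, $\partial_t v(T,\cdot)=-\psi$. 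Since $f$ ranges over all smooth sections with support in $(0,T)\times\cO$, this forces $v\equiv 0$ on $(0,T)\times\cO$. Because the sine propagator satisfies $v(T+r,\cdot)=-v(T-r,\cdot)$, the vanishing in fact holds on $(0,2T)\times\cO$, and then Theorem~\ref{uniqcont} applied on $(0,2T)\times M(T,\cO)$ yields $v\equiv 0$ on the double cone $C(T,\cO)$. For any $x$ with $d_g(x,\cO)<T$, the point $(T,x)$ has a spacetime neighborhood contained in $C(T,\cO)$, so $v$ vanishes near $(T,x)$ and hence $\psi(x)=-\partial_t v(T,x)=0$; since such points exhaust $M(T,\cO)$ up to a Lebesgue-null set, $\psi=0$ in $L^2(M(T,\cO);E)$, proving the first assertion. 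For the ``further'' statement I would reparametrize time: given $f\in\CI_0((T-r,T)\times\cO;E)$, finite speed of propagation identifies $Wf(T,\cdot)$ with $W\tilde f(r,\cdot)$ for $\tilde f(s,x):=f(s+T-r,x)\in\cF(r,\cO)$, so the density already proved with $T$ replaced by $r$ gives the claim.

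The main obstacle I anticipate is the regularity bookkeeping in applying Theorem~\ref{uniqcont}: the dual element $\psi$ is only $L^2$, so $v$ is merely an $H^1_{\mathrm{loc}}$ solution, and one must either use the form of the unique continuation principle valid at that regularity (available here since $\partial_t^2-\Delta_E$ is principally scalar, cf.~\cite{KKI}) or regularize $\psi$ and pass to the limit. A secondary, purely geometric point to check is that $C(T,\cO)$ genuinely contains a spacetime neighborhood of $(T,x)$ whenever $d_g(x,\cO)<T$, and that $M(T,\cO)$ is the closure of $\{x:d_g(x,\cO)<T\}$.
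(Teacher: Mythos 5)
Your proof is correct and takes essentially the same route as the paper: duality, Green's identity (you phrase it via self-adjointness of the sine propagator, which is the same computation), odd extension across $t=T$ (you note this is automatic from the propagator's odd symmetry, where the paper reflects explicitly), and then Tataru-type unique continuation (Theorem~\ref{uniqcont}) on the double cone to kill $\psi$ on $M(T,\cO)$. A minor improvement over the paper's write-up is that you correctly flag the $H^1_{\mathrm{loc}}$ regularity of the dual solution $v$ -- the paper asserts $u\in\CI$, which is not literally true for $L^2$ data -- and note that the principally scalar form of the operator lets Theorem~\ref{uniqcont} apply at that regularity.
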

\begin{proof}

From the finite speed of propagation we have that $\cW_T\subset L^2(M(T,\cO);E)$. Thus it suffices to show that the orthogonal complement of $\cW_T$ contains only the origin. Let $\phi\in L^2(M(T,\cO);E)$ satisfy $\langle Wf(T,\cdot),\phi \rangle_{L^2(M;E)}=0$ for all $f\in \cF(T,\cO)$. Let $u\in \CI(\bbR\times M)$ solve
\[ \begin{cases}
(\pa_t^2-\Delta_E)u = 0, & (0,T)\times M \\ u|_{t=T}=0 \; \pa_t u|_{t=T}=\phi
\end{cases}  \]
From Green's identities we have
\[ \langle f,u\rangle_{L^2((0,T)\times M;E)} = \langle (\pa_t^2-\Delta_E)Wf,u\rangle_{L^2((0,T)\times M;E)} - \langle Wf,(\pa_t^2-\Delta_E)u\rangle_{L^2((0,T)\times M;E)} =0   \]
thus $u\equiv 0$ on $(0,T]\times \cO$ by density of $\cF(T,\cO)\subset L^2((0,T)\times \cO;E)$. Extending $u$ across $t=T$ by the odd reflection $ u(t,x)=-u(2T-t,x)$, and denoting the extension by $U$ we have that it satisfies
\[ \begin{cases}
(\pa_t^2-\Delta_E)U=0 & (0,2T)\times M \\ U|_{t=T}=0, \; \pa_tU|_{t=T}=\pa_tu|_{t=T}=\phi
\end{cases}  \]
by our odd reflection, thus $U|_{(0,2T)\times \cO}\equiv 0$. Now by theorem \ref{uniqcont} we conclude that $U|_{C(T,\cO)}\equiv 0$, in particular since $\{T\}\times M(T,\cO)\subset C(T,\cO)$ we have $\phi|_{M(T,\cO)}=\pa_tU|_{\{T\}\times M(T,\cO)}\equiv 0$ as claimed.
\end{proof}

Having proven this fact, the proofs of 1)\; 2) and 3) from \cite{HLOS} generalize immediately to the bundle-valued case. One lemma they depend on is the Blagovestchenskii identity:

\begin{lemma}[Blagovestchenskii Identity]
Let $(M,g)$ be complete. Let $T>0$ and $\cO\subset M$ open and bounded. Let $f,h\in \cF(2T,\cO)$, then
\[ \langle Wf(T,\cdot),Wg(T,\cdot) \rangle_{L^2(M; E)} = \langle f,(JL_{\sD,\cO}^\text{wave}-(L_{\sD,\cO}^\text{wave})^*J)g\rangle_{L^2((0,T)\times M; E)}  \]
where $J:L^2((0,2T);E)\to L^2((0,T);E)$ is the time averaging operator $J\phi(t)=\tfrac{1}{2}\int_t^{2T-t}\phi(s)ds$.
\end{lemma}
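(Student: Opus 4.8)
The plan is to run the classical Blagovestchenskii computation with the function of two time variables
\[ w(t,s):=\langle Wf(t,\cdot),\,Wg(s,\cdot)\rangle_{L^2(M;E)}, \]
and to isolate the single place where the bundle structure matters, namely the self-adjointness of $\Delta_E$ on $L^2(M;E)$. Since $M$ is closed and $\nabla^E$ is Hermitian and compatible with Clifford multiplication, $\D$ is essentially self-adjoint (as recalled in \S \ref{dirac-def}), hence $\Delta_E=\D^2$ is self-adjoint; equivalently, for smooth sections $\langle\Delta_E u,v\rangle_{L^2(M;E)}=\langle u,\Delta_E v\rangle_{L^2(M;E)}$ with no boundary contribution. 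The wave solutions $Wf,Wg$ are smooth in $(t,x)$ because $f,g\in\cF(2T,\cO)$ are smooth and compactly supported and the principally scalar operator $\pa_t^2-\Delta_E$ propagates regularity; finite speed of propagation confines all spatial integrals to compact sets, so every interchange of integrals and every integration by parts below is justified.

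First I would differentiate $w$ twice in each time variable. Using $(\pa_t^2-\Delta_E)Wf=f$, $(\pa_s^2-\Delta_E)Wg=g$ and self-adjointness of $\Delta_E$, the two $\Delta_E$ terms cancel and
\[ (\pa_t^2-\pa_s^2)w(t,s)=\langle f(t,\cdot),Wg(s,\cdot)\rangle_{L^2(M;E)}-\langle Wf(t,\cdot),g(s,\cdot)\rangle_{L^2(M;E)}=:F(t,s), \]
while the vanishing Cauchy data of $Wf$ give $w(0,s)=0$ and $\pa_t w(0,s)=0$. Thus $w$ solves this $1{+}1$-dimensional inhomogeneous wave equation with zero initial data, and Duhamel's principle together with d'Alembert's formula gives
\[ w(t,s)=\frac12\int_0^t\int_{s-(t-r)}^{s+(t-r)}F(r,\sigma)\,d\sigma\,dr. \]
Evaluating at $t=s=T$ collapses the inner range to $\sigma\in(r,2T-r)\subset(0,2T)$, so
\[ \langle Wf(T,\cdot),Wg(T,\cdot)\rangle_{L^2(M;E)}=\frac12\int_0^T\int_r^{2T-r}F(r,\sigma)\,d\sigma\,dr. \]

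Next I would rewrite each half of $F$ in terms of $L_{\sD,\cO}^{\text{wave}}$ and $J$. In the first term $f(r,\cdot)$ is supported in $\cO$, so $\langle f(r,\cdot),Wg(\sigma,\cdot)\rangle_{L^2(M;E)}=\langle f(r,\cdot),(L_{\sD,\cO}^{\text{wave}}g)(\sigma,\cdot)\rangle_{L^2(\cO;E)}$, and carrying out the $\sigma$-integral recognizes $\tfrac12\int_r^{2T-r}(\cdot)\,d\sigma$ as $J$ acting in the time variable, producing $\langle f,\,JL_{\sD,\cO}^{\text{wave}}g\rangle_{L^2((0,T)\times M;E)}$. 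In the second term $g(\sigma,\cdot)$ is supported in $\cO$, so $\langle Wf(r,\cdot),g(\sigma,\cdot)\rangle_{L^2(M;E)}=\langle (L_{\sD,\cO}^{\text{wave}}f)(r,\cdot),g(\sigma,\cdot)\rangle_{L^2(\cO;E)}$; the $\sigma$-integral now turns $g$ into $Jg$, producing $\langle L_{\sD,\cO}^{\text{wave}}f,\,Jg\rangle_{L^2((0,T)\times M;E)}=\langle f,\,(L_{\sD,\cO}^{\text{wave}})^*Jg\rangle_{L^2((0,2T)\times M;E)}$, where the adjoint is taken for the $h_E$-induced $L^2$ pairing (and is well defined because, by finite speed of propagation and energy estimates, $L_{\sD,\cO}^{\text{wave}}$ is bounded on the relevant finite-time $L^2$ spaces). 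Subtracting gives $\langle f,(JL_{\sD,\cO}^{\text{wave}}-(L_{\sD,\cO}^{\text{wave}})^*J)g\rangle$, as claimed.

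The computation is essentially bookkeeping, and the only genuine new input over the scalar case is the self-adjointness of $\Delta_E$ for the Hermitian $L^2$ pairing, which I expect to be the one step worth stating carefully: it is precisely where the compatibility of $h_E$ and $\nabla^E$ enters, and it is what forces the cross terms in $(\pa_t^2-\pa_s^2)w$ to collapse to the clean source terms. A secondary care point is to make sure $(L_{\sD,\cO}^{\text{wave}})^*$ is interpreted on the correct finite-time $L^2$ space over $\cO$ and that the hypothesis $f,g\in\cF(2T,\cO)$ is invoked exactly when the integrals over $M$ are reduced to integrals over $\cO$.
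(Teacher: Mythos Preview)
Your argument is correct and is precisely the classical Blagovestchenskii computation: introduce $w(t,s)=\langle Wf(t,\cdot),Wg(s,\cdot)\rangle$, use self-adjointness of $\Delta_E=\D^2$ on $L^2(M;E)$ to reduce $(\pa_t^2-\pa_s^2)w$ to source terms, solve the resulting $1{+}1$ wave equation by d'Alembert/Duhamel, and evaluate at $t=s=T$ to recognize the averaging operator $J$. The paper itself does not give a proof of this lemma---it states the identity and moves on, treating it as a standard ingredient inherited from the scalar boundary-control literature (in particular the cited \cite{HLOS}); your write-up supplies exactly the omitted standard argument, with the correct observation that the only bundle-specific input is the Hermitian self-adjointness of $\Delta_E$.

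One cosmetic slip: in your second term you wrote the pairing over $L^2((0,2T)\times M;E)$ where the $r$-integral in fact only runs over $(0,T)$, matching the $L^2((0,T)\times M;E)$ pairing in the stated identity; this is harmless but worth correcting for consistency with the lemma's right-hand side.
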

%
%
%
%
Another such lemma is that the source-to-source solution operator determines the distance function:
\begin{lemma}
Let $(M,g)$ and $\cO\subset M$ be open and bounded. Then $(\cO, L_{\sD,\cO}^{\text{wave}})$ determines the distance function $d_g$ on $\cO\times \cO$.
\end{lemma}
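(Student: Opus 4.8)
The plan is to follow the boundary control method and reconstruct $d_g$ on $\cO\times\cO$ from correlations of waves; since the wave operator $\pa_t^2-\Delta_E$ is principally scalar, every step below is purely functional-analytic and carries over from the scalar case without change.

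\emph{Step 1: the data computes wave correlations and identifies domains of influence.} By the Blagovestchenskii identity, for each $T>0$ and all $f,h\in\cF(2T,\cO)$ the number $\langle Wf(T,\cdot),Wh(T,\cdot)\rangle_{L^2(M;E)}$ is an explicit expression in $L_{\sD,\cO}^{\text{wave}}$ and the time-averaging operator $J$, hence is determined by $(\cO,L_{\sD,\cO}^{\text{wave}})$. Applying Theorem~\ref{approx-contr} with the open set $\cO$ replaced by a metric ball $B_\delta(x)$ with $\overline{B_\delta(x)}\subset\cO$ gives that, for $r\le T$, the closure in $L^2(M;E)$ of $\{Wf(T,\cdot):f\in\CI_0((T-r,T)\times B_\delta(x);E)\}$ is exactly $L^2(M(r,B_\delta(x));E)$, the space of $L^2$-sections supported in the domain of influence $M(r,B_\delta(x))$; a short computation with the distance function identifies the latter with the closed metric ball $\overline{B_{r+\delta}(x)}:=\{z\in M:d_g(z,x)\le r+\delta\}$.

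\emph{Step 2: detecting overlaps, and the distance formula.} Fix $x,y\in\cO$ and $\delta>0$ small enough that $\overline{B_\delta(x)},\overline{B_\delta(y)}\subset\cO$. For $r_1,r_2>0$ the subspaces $L^2(\overline{B_{r_1+\delta}(x)};E)$ and $L^2(\overline{B_{r_2+\delta}(y)};E)$ are orthogonal in $L^2(M;E)$ if and only if the two closed balls meet in a set of measure zero, since any nonzero section supported in their intersection pairs nontrivially with itself. By Step~1 this orthogonality is equivalent to the vanishing of $\langle Wf(T,\cdot),Wh(T,\cdot)\rangle_{L^2(M;E)}$ for every $f\in\CI_0((T-r_1,T)\times B_\delta(x);E)$ and $h\in\CI_0((T-r_2,T)\times B_\delta(y);E)$, for any fixed $T>\max(r_1,r_2)$, and each such pairing is a known function of $(\cO,L_{\sD,\cO}^{\text{wave}})$ via the Blagovestchenskii identity. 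Hence the data decides, for all admissible $r_1,r_2,\delta$, whether the two balls overlap in positive measure, which on the closed manifold $M$ happens exactly when $d_g(x,y)<r_1+r_2+2\delta$. Therefore
\[
 d_g(x,y)=\inf\bigl\{\,r_1+r_2\ :\ \overline{B_{r_1+\delta}(x)}\cap\overline{B_{r_2+\delta}(y)}\ \text{has positive measure for all sufficiently small }\delta>0\,\bigr\}
\]
is determined by $(\cO,L_{\sD,\cO}^{\text{wave}})$, and ranging over $x,y\in\cO$ recovers $d_g$ on $\cO\times\cO$.

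\emph{Main obstacle.} There is no serious analytic difficulty once Theorem~\ref{approx-contr} is available; the care needed is organizational. The reconstruction has access only to the bilinear correlation functional $(f,h)\mapsto\langle Wf(T,\cdot),Wh(T,\cdot)\rangle$, not to the Hilbert space $L^2(M;E)$ itself, so every geometric assertion about domains of influence must be recast as a vanishing/non-vanishing statement about these correlations — which is precisely what the equivalence ``two domains of influence overlap $\iff$ the associated reachable subspaces are not orthogonal'' achieves. The only remaining subtlety is the measure-zero borderline $r_1+r_2=d_g(x,y)$ (externally tangent balls), but it is invisible to the infimum and is washed out by the quantifier over $\delta$, so it causes no trouble.
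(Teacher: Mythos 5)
Your argument is sound but routes to $d_g$ differently than the paper. The paper's proof is a first-arrival-time argument: for small $\eps$ it sets $t_\eps$ to be the infimum of times at which some solution $L_{\sD,\cO}^{\text{wave}}f$, sourced in a small neighborhood of $x$, has support meeting a small neighborhood of $y$; finite propagation speed bounds $t_\eps$ from below by the $g$-distance between the two neighborhoods, Theorem~\ref{approx-contr} gives the reverse inequality, and $\eps\to 0$ yields $d_g(x,y)$. You instead characterize the reachable subspaces as $L^2$ of domains of influence, detect overlap of two such domains by testing orthogonality via the Blagovestchenskii identity, and extract $d_g$ as an infimum over radii. Both proofs rest on the same two inputs (finite speed and Theorem~\ref{approx-contr}) and recover the same quantity; the paper's is shorter because it reads the answer directly off the support of the restricted solution, which is itself part of the data, while yours only ever touches the $L^2$-correlations. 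The Blagovestchenskii/orthogonality machinery you invoke is precisely what the paper needs later (for cut times and for the bundle and connection), so your route anticipates that, but it carries more weight than this particular lemma requires.

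One gap to repair: you form $g$-metric balls $B_\delta(x), B_\delta(y)\subset\cO$ and use them both as source supports and as the parameters of your recovery formula, but the given data $(\cO, L_{\sD,\cO}^{\text{wave}})$ provides $\cO$ only as a smooth manifold. The $g$-balls inside $\cO$ are therefore not known a priori --- recovering them (equivalently, $g|_\cO$) is part of what the lemma asserts, so using them makes the formula circular. The paper avoids this by fixing an auxiliary metric $d_0$ on $\cO$ inducing the same topology as $g$ and working with $d_0$-balls throughout. Your argument survives that substitution: the domain of influence $M(r, B_\delta^{d_0}(x))$ is sandwiched between the $g$-ball of radius $r$ about $x$ and the one of radius $r+\rho(\delta)$, where $\rho(\delta):=\sup\{d_g(x,w): w\in B_\delta^{d_0}(x)\}\to 0$ as $\delta\to 0$, so the $\delta\to 0$ limit of your infimum still returns $d_g(x,y)$. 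But as written the reconstruction presupposes the very metric structure it is supposed to determine.
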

\begin{proof}
Set $x,y\in \cO$, and choose an auxiliary metric $d_0$ on $\cO$ which induces the same metric space topology on $\cO$ as $g$. Let $\eps>0$ and consider $\cB_{x,\eps}:=(0,\infty)\times B_\eps^{d_0}(x)$ and 
\[ t_\eps:=\inf \{t>0 : \exists f\in \CI_0(\cB_{x,\eps};E), \; \supp{(L_{\sD,\cO}^{\text{wave}}f)}(t,\cdot)\cap B_\eps^{d_0}(y)\neq \emptyset\}.  \]
From finite propagation speed and lemma \eqref{approx-contr} we have 
\[ t_\eps = \text{dist}_g(B_\eps^{d_0}(x),B_\eps^{d_0}(y))   \]
and thus $\lim_{\eps\to 0}t_\eps=d_g(x,y)$.
\end{proof}

Let $\gamma_{y,\xi}(t)$ be the time-$t$ flow of the unique unit speed geodesic starting at $y$ with initial velocity vector $\xi$. Several facts about geometric determination in our setting will rely on the notion of cut time,
\[ \tau(y,\xi)=\sup \{t>0: d_g(y,\gamma_{y,\xi}(t))=t\}. \] 
Note that we have
\begin{equation}\label{exp-closure}
    M = \{ \gamma_{y,\xi}(t)\in M : y\in \cO,\; \xi\in S_yM,\; t<\tau(y,\xi) \}  
\end{equation}  
for $\cO\subset M$ open and non-empty. In other words any point in $M$ can be reached by geodesics originating in $\cO$ which do not meet the cut locus.

Further, the cut distance and distance between points can be determined using only the source-to-source solution data
\[(\cO, g|_{\cO}, h_E|_\cO, L_{\sD,\cO}^{\text{wave}})\]
namely as shown by \cite{HLOS} in the scalar case,
\begin{proposition}
For any $y\in \cO$, $\xi\in S_yM$ we can find the cut time $\tau(y,\xi)$ from the source-to-source solution data $(\cO, g|_{\cO}, h_E|_{\cO}, L_{\sD,\cO}^{\text{wave}})$.

Further, given $z,y\in \cO$, $\xi\in T_y\cO$, $||\eta||=1$ and $r<\tau(y,\eta)$. Then $(\cO, g|_{\cO}, h_E|_{\cO}, L_{\sD,\cO}^{\text{wave}})$ determines $d_g(p,z)$ where $\gamma_{y,\xi}(r)=z$.
\end{proposition}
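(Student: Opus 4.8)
The plan is to follow \cite{HLOS} essentially verbatim; the point is that every analytic ingredient is already available in the bundle setting. Finite speed of propagation and the Tataru-type unique continuation (Theorem \ref{uniqcont}) hold because $\pa_t^2-\Delta_E$ is principally scalar, and the Blagovestchenskii identity together with approximate controllability (Theorem \ref{approx-contr}) are exactly as stated above. First I would isolate the one tool that drives everything, \emph{detection of domains of influence}. Fix a large $T$. For an open set $U\subseteq\cO$ and $0<r\le T$ put $\cW_{U,r}:=\{Wf(T,\cdot):f\in\CI_0((T-r,T)\times U;E)\}$, which by the proof of Theorem \ref{approx-contr} (applied with $U$ in place of $\cO$) is dense in $L^2(M(r,U);E)$. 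The Blagovestchenskii identity computes the entire $L^2(M;E)$ Gram matrix of any finite family $Wf_1(T,\cdot),\dots,Wf_N(T,\cdot)$ with $f_i\in\cF(2T,\cO)$ from $L^{\text{wave}}_{\sD,\cO}$ together with the inner product of $L^2((0,2T)\times\cO;E)$, and the latter uses only $g|_\cO$ and $h_E|_\cO$. Hence the data decides, for open $U_1,U_2\subseteq\cO$ and $r_1,r_2>0$, whether $\overline{\cW_{U_1,r_1}}\subseteq\overline{\cW_{U_2,r_2}}$ --- equivalently, by Theorem \ref{approx-contr}, whether $M(r_1,U_1)\subseteq M(r_2,U_2)$ up to a $\Vol_g$-null set --- by testing whether the $L^2$-distance from each $Wf(T,\cdot)$, $f\in\CI_0((T-r_1,T)\times U_1;E)$, to $\mathrm{span}\{Wg_j(T,\cdot):j\le N\}$ tends to $0$ as one exhausts $\CI_0((T-r_2,T)\times U_2;E)$, a quantity read off from the Gram matrices. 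Thus the data recovers the poset of domains of influence $\{M(r,U):U\subseteq\cO\text{ open},\ r>0\}$ under a.e.-inclusion.

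Both assertions then follow from a single outward induction, as in \cite{HLOS}. I maintain an open set $\cN\supseteq\cO$ together with the collection of distance functions $\{q\mapsto d_g(q,z):z\in\cO\}$ for $q\in\cN$; applying the reconstruction of \cite{HLOS} (steps 1--3 of the outline above, which carry over once Theorem \ref{approx-contr} is in hand) to the partial distance data available on $\cN$ determines $g$ on $\cN$, hence all geodesics and cut times that stay inside $\cN$. Normal coordinates show $\cN$ may be taken to contain a neighborhood of $\cO$ at the start. To enlarge $\cN$: for a point $p$ within distance less than the injectivity radius of $\cN$, lying on a minimizing geodesic $\gamma_{y,\eta}$ from $y\in\cO$ with $p=\lim_{\sigma\downarrow0}\gamma_{y,\eta}(r-\sigma)$ and $\gamma_{y,\eta}([0,r-\sigma])\subset\cN$, one has $d_g(p,z)=\inf\{s>0:\text{a neighborhood of }p\text{ lies, up to a null set, in }M(s,B_\delta(z))\text{ for all small }\delta\}$; the membership is certified \emph{without} probing at $p$ by realizing a neighborhood of $p$ as a difference of domains of influence issued from $\cO$ and passing through the already-explored $\cN$ (here finite speed of propagation keeps everything localized along $\gamma_{y,\eta}$), which is decidable by the tool of the previous paragraph. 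This determines $z\mapsto d_g(p,z)$ on a region strictly larger than $\cN$; iterating, and invoking \eqref{exp-closure}, exhausts $M$. In particular we obtain $d_g(\gamma_{y,\eta}(r),z)$ for every $z\in\cO$ and every $r<\tau(y,\eta)$ --- the second assertion --- and, taking $z=y$, we obtain $r\mapsto d_g(\gamma_{y,\eta}(r),y)$, so that $\tau(y,\eta)=\sup\{r>0:d_g(\gamma_{y,\eta}(r),y)=r\}$ is determined --- the first assertion.

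The main obstacle is the localization step common to both parts: one must show that thin differences of domains of influence genuinely track individual minimizing geodesics right up to the cut time, uniformly as the localization parameters tend to $0$, so that neighborhoods of an interior point $p=\gamma_{y,\eta}(r)$ with $r<\tau(y,\eta)$ become accessible purely from the a.e.-inclusion order among domains of influence based in $\cO$; and that the outward induction genuinely advances by a definite amount at each step while respecting cut times and never loses track of the reconstructed metric. All of this is carried out in \cite{HLOS} in the scalar case, and none of it is sensitive to the bundle structure: $\pa_t^2-\Delta_E$ is principally scalar (so finite speed of propagation and Theorem \ref{uniqcont} apply unchanged), Theorem \ref{approx-contr} is the required bundle-valued controllability statement, and every inner product entering the argument is taken over $\cO$, where $g|_\cO$ and $h_E|_\cO$ belong to the data --- so the scalar proof transfers mutatis mutandis.
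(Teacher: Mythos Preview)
Your proposal is correct in spirit and relies on the same analytic core as the paper --- the Blagovestchenskii identity together with approximate controllability (Theorem~\ref{approx-contr}) to decide inclusions among domains of influence --- but the logical architecture is genuinely different. The paper does \emph{not} run an outward induction enlarging a known region $\cN$; instead it writes down, for each of the two assertions, a single explicit formula in terms of a testable ball-inclusion condition. For the cut time it introduces the condition
\[
\text{there exists }\eps>0\text{ such that }B_{r+\eps}(x)\subset\overline{B_{s+r}(y)},\qquad x=\gamma_{y,\xi}(s),
\]
and shows $\tau(y,\xi)$ is the infimum of $s+r$ over parameters for which this holds; for the distance it uses the three-ball condition $B_{\ell_p}(p)\subset\overline{B_{\ell_y}(y)\cup B_{\ell_z}(z)}$ and reads off $d_g(p,z)$ as an infimum over admissible radii. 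Each such inclusion is then shown to be equivalent to an $L^2$-approximation statement of the form $\|Wf(T,\cdot)-Wf_j(T,\cdot)\|_{L^2(M;E)}\to0$ with controlled supports, which the Blagovestchenskii identity computes from the data. So the paper obtains both quantities \emph{directly}, with no layer-stripping and no appeal to having already reconstructed $g$ on an intermediate region.

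What your approach buys is a conceptually uniform mechanism (build $\cN$, extend by the injectivity radius, repeat) that packages the two assertions as byproducts of a global reconstruction; the cost is that you must argue the induction advances a definite amount and terminates, and your derivation of $\tau(y,\eta)$ at the end is slightly delicate (you only claim $d_g(\gamma_{y,\eta}(r),y)$ for $r<\tau(y,\eta)$, whereas the $\sup$ characterization needs values at or past the cut time; this is fixable once $\cN=M$, but it is an extra wrinkle). The paper's route is shorter and avoids these bookkeeping issues by never leaving the level of ball-inclusion tests based at points of $\cO$.
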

\begin{proof}
For $y,\xi$ fixed the geodesic segment $\gamma_{y,\xi}([0,s])$ is determined by the given data for $s$ small. Choosing $s>0$ sufficiently small that $\gamma_{y,\xi}([0,s])\subset \cO$, we observe that the condition
\begin{equation}\label{ball-cond}
    \text{there exists $\eps>0$ such that $B_{r+\eps}(x)\subset\overline{B_{s+r}(y)}$}
\end{equation}
determines the cut distance by the forumla,
\[ \tau(y,\xi)=\inf\{ s+r>0: r,s>0, \gamma_{y,\xi}([0,s])\subset\cO, \text{ \eqref{ball-cond} holds} \}.  \]
This follows from its contrapositive, arguing via geodesic continuation of the arc $\gamma_{y,\xi}([0,s])$ and the triangle inequality. So $\tau(y,\xi)$ is determined by relation \eqref{ball-cond}. 

Define $S_\eps(x,r)=(T-(r-\eps),T)\times B_\eps(x)$. Using finite propagation speed and lemma \eqref{approx-contr} we can show that for $p,y,z\in M$, $\eps>0$, and $\ell_p,\ell_y,\ell_z>\eps$ the condition that 
\begin{equation}\label{3ell}
B_{\ell_p}(p)\subset \overline{B_{\ell_y}(y)\cup B_{\ell_z}(z)}  \end{equation}
is equivalent to the existence of $\{f_j\}\subset \CI_0(S_\eps(y,\ell_y)\cup S_\eps(z,\ell_z); E)$ for every $f\in \CI_0(S_\eps(p,\ell_p);E)$ such that
\begin{equation}\label{ball-cond-limit}
     ||Wf(T,\cdot)-Wf_j(T,\cdot)||_{L^2(M;E)}\xrightarrow{j\to \infty} 0.
\end{equation}
Now choosing $\eps>0$ sufficiently small that $B_\eps(y)\cup B_\eps(z)\subset\cO$, and setting $y=z$, $\ell_y=\ell_z=s+r$, and $\ell_p=r+\eps$, we see that relation \eqref{ball-cond} is equivalent to \eqref{ball-cond-limit}. However, using the Blagovestchenskii identity we see that $(\cO, g|_{\cO}, h_E|_{\cO}, L_{\sD,\cO}^{\text{wave}})$ determines \eqref{ball-cond-limit} and thus the cut time as claimed.

The proof of the second claim follows similarly, by considering $s\in (0,r)$ such that $\gamma_{y,\xi}([0,s])\subset \cO$, and set $\gamma_{y,\xi}(s)=p$. Then we can show that 
\[ d_g(p,z) = \inf_{R>0}\{ \eqref{3ell} \text{holds with $\ell_p=r-s+\eps$, $\ell_y=r$, $\ell_z=R$ for some $R$ and $\eps>0$}  \} \] 
by first observing that $d_g(p,z)$ is less than or equal to this infimum (since $r<\tau(y,\xi)$), and then showing this infimum is attained by a similar argument as above, using the Blagovestchenskii identity and lemma \ref{approx-contr}. From this formulation of $d_g(p,z)$ we have that it is determined by the source-to-solution data.
\end{proof}

Using this last proposition and the fact of $\eqref{exp-closure}$, we have that $(\cO, g|_{\cO}, h_E|_{\cO}, L_{\sD,\cO}^{\text{wave}})$ determines the family of distance functions,
\[ R(M):=\{ d_g(x,\cdot)|_{\cO}: x\in M \}\subset \cC^0(\overline{\cO}). \]
In Helin Lassas Oksanen Saksala they prove that this set can be topologized as a smooth Riemannian manifold, isometric to $(M,g)$, and serving as a background space depending only on the behavior of solutions in $\cO$, they show the two Riemannian manifolds are isometric. Their proof, depending only on the distance functions, has no dependence on whether or not the given wave equation is scalar or bundle-valued, and thus we obtain:
\begin{proposition}

Let $\sD_j:=(M_j, E_j, g_j, h_{E_j}, \nabla^{E_j})$ for $j=1,2$ be two Dirac bundles over closed Riemannian manifolds of dimension $n\geq 2$, and let $\cO_j\subset M_j$ be non-empty open sets. Assuming there exists a diffeomorphism $\psi:\cO_1\to\cO_2$ satisfying 
\[ L_{\sD_1,\cO_1}^{\text{wave}}(\psi^*f) =\psi^*(L_{\sD_2,\cO_2}^{\text{wave}}f), \quad \forall f\in \CI_0((0,\infty)\times \cO_2;E).  \]
Then $(M_1,g_1)$ and $(M_2,g_2)$ are isometric Riemannian manifolds. 
\end{proposition}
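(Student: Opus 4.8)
The plan is to obtain this Proposition by assembling three ingredients already established — the lemma that $(\cO,L_{\sD,\cO}^{\text{wave}})$ determines $d_g$ on $\cO\times\cO$, the Proposition that computes cut times and hence the distance data $(\cO,g|_\cO,R(M))$, and the reconstruction, carried out in \cite{HLOS}, of a closed Riemannian manifold from its distance data — and transporting each of them across the diffeomorphism $\psi$. First I would argue that $\psi$ is a Riemannian isometry of $(\cO_1,g_1|_{\cO_1})$ onto $(\cO_2,g_2|_{\cO_2})$: the distance-function lemma builds $d_{g_j}|_{\cO_j\times\cO_j}$ from the pair $(\cO_j,L_{\sD_j,\cO_j}^{\text{wave}})$ by a procedure that only inspects supports of the functions $L_{\sD_j,\cO_j}^{\text{wave}}f$ (through finite speed of propagation and Theorem \ref{approx-contr}), so it is equivariant under diffeomorphisms of $\cO_j$. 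The hypothesis $L_{\sD_1,\cO_1}^{\text{wave}}(\psi^*f)=\psi^*(L_{\sD_2,\cO_2}^{\text{wave}}f)$ says exactly that $\psi$ conjugates $L_{\sD_2,\cO_2}^{\text{wave}}$ into $L_{\sD_1,\cO_1}^{\text{wave}}$, and running the procedure through $\psi$ then yields $d_{g_1}(x,y)=d_{g_2}(\psi(x),\psi(y))$ for all $x,y\in\cO_1$. Recovering the metric tensor from the restricted distance function (e.g.\ by polarizing the second-order Taylor expansion of $d_{g_1}(\cdot,\cdot)^2$ along the diagonal) shows $\psi$ is the claimed isometry; together with the bundle part of $\psi$ it identifies $E_1|_{\cO_1}\cong E_2|_{\cO_2}$ as Hermitian bundles, and $\psi$ extends to an isometry $\overline{\cO_1}\to\overline{\cO_2}$ of the closures.

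Under this identification I would write $(\cO,g|_\cO,h_E|_\cO)$ for the common restricted data, whereupon the hypothesis becomes $L_{\sD_1,\cO_1}^{\text{wave}}=L_{\sD_2,\cO_2}^{\text{wave}}$, so the entire source-to-source solution data $(\cO,g|_\cO,h_E|_\cO,L_{\sD,\cO}^{\text{wave}})$ of $\sD_1$ and of $\sD_2$ coincide. By the cut-time Proposition this data determines, for each $y\in\cO$ and $\xi\in S_yM$, the cut time $\tau(y,\xi)$ and, through geodesic continuation into $M$ together with \eqref{exp-closure}, every distance function $d_g(x,\cdot)|_\cO$ with $x\in M$; hence it determines $R(M)\subset\cC^0(\overline\cO)$ and with it the distance data $(\cO,g|_\cO,R(M))$. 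Consequently the distance data of $\sD_1$ and $\sD_2$ agree, i.e.\ $R(M_1)=R(M_2)$ under the identification furnished by $\psi$. Finally, the reconstruction of \cite{HLOS} — which topologizes the abstract set $R(M_j)$ as a smooth Riemannian manifold isometric to $(M_j,g_j)$, using only the distance functions and making no reference to the bundle — applied to the common distance data produces a Riemannian isometry $(M_1,g_1)\to(M_2,g_2)$ restricting to $\psi$ on $\cO$, which is the assertion.

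The step calling for the most care is the middle one: one must check that pullback along $\psi$ carries the whole recipe computing $R(M_2)$ onto the one computing $R(M_1)$, and the genuinely delicate point is that the Hermitian metrics $h_{E_j}|_{\cO_j}$ really do enter the cut-time recipe — through the Blagovestchenskii identity and the $L^2((0,T)\times\cO;E|_\cO)$-adjoint of $L_{\sD,\cO}^{\text{wave}}$. One therefore needs the bundle component of $\psi$ to be a \emph{Hermitian} isomorphism; this is implicit in applying $\psi^*$ to sections of Hermitian bundles, and in the setting where this Proposition is invoked it is anyway available from the prior reduction to equality of the heat kernels. If one began only with a non-metric bundle isomorphism, one would additionally have to show that the output $R(M)$ of the recipe does not depend on the fibre metric chosen on $E|_\cO$; past that point everything — the cut-time Proposition and the \cite{HLOS} reconstruction — is purely Riemannian and goes through verbatim.
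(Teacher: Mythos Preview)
Your proposal is correct and follows exactly the approach the paper takes: the paper does not give this Proposition a separate proof, but presents it as the output of the chain of results you invoke --- the lemma that $(\cO,L_{\sD,\cO}^{\text{wave}})$ determines $d_g|_{\cO\times\cO}$, the cut-time Proposition yielding the distance data $R(M)$, and then the purely Riemannian reconstruction of \cite{HLOS}, noting explicitly that the latter ``depending only on the distance functions, has no dependence on whether or not the given wave equation is scalar or bundle-valued.'' Your additional remark that the Hermitian structure on $E|_\cO$ enters only through the Blagovestchenskii identity (via the $L^2$-adjoint of $L_{\sD,\cO}^{\text{wave}}$), and hence that one needs the bundle part of $\psi$ to be Hermitian, is a genuine clarification the paper leaves implicit; it is resolved, as you say, by the standing hypothesis of a Hermitian bundle isomorphism over $\cO$ coming from the earlier heat-kernel reduction.
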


Having shown the two Riemannian manifolds are isometric, it remains to show that we can also recover the Hermitian bundle structure and connection.
\begin{lemma}\label{ip-det}
Let $f,h\in \cF(2T,\cO)$, for $T>0,\; \cO\subset M$ open. Then $L_{\sD,\cO}^\text{wave}$ determines the inner products
\[ \langle Wf(T,\cdot), Wg(T,\cdot) \rangle_{L^2(M;E)}.  \]
Further, for any $x\in M(T,\cO)$ there exists functions 
\[\{g_\ell\}_{\ell=1}^{\emph{Rk}\; E}\subset \cF(2T,\cO)\]
such that $\{Wg_\ell(T,x))\}_{\ell=1}^{\emph{Rk}\;E}$ forms an orthonormal basis of $E_x$.
\end{lemma}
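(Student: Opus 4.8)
The plan is to treat the two assertions separately. The first is immediate from the Blagovestchenskii identity: its right--hand side $\langle f,(JL_{\sD,\cO}^{\text{wave}}-(L_{\sD,\cO}^{\text{wave}})^{*}J)g\rangle_{L^{2}((0,T)\times M;E)}$ is assembled only from $f$, $g$, the explicit time--averaging operator $J$, the source--to--solution map $L_{\sD,\cO}^{\text{wave}}$ and its $L^{2}$--adjoint (which $L_{\sD,\cO}^{\text{wave}}$ determines), so the numbers $\langle Wf(T,\cdot),Wg(T,\cdot)\rangle_{L^{2}(M;E)}$ for $f,g\in\cF(2T,\cO)$ are read off from the data.

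For the second assertion I would first reduce to surjectivity of the evaluation map $\Lambda_{x}\colon\cF(2T,\cO)\to E_{x}$, $f\mapsto Wf(T,x)$, which is well defined pointwise since $Wf\in\CI([0,\infty)\times M;E)$ whenever $f$ is smooth. Granting that $\Lambda_{x}$ is onto, one just chooses $g_{1},\dots,g_{\rank E}\in\cF(2T,\cO)$ with $Wg_{\ell}(T,x)=e_{\ell}$ for $\{e_{\ell}\}$ any $h_{E}$--orthonormal basis of $E_{x}$; the bare existence statement of the lemma needs nothing more (in particular part one is not used here). Throughout I would take $x$ with $d_{g}(x,\cO)<T$, which is what the reconstruction uses and is in any case forced: if $d_{g}(x,\cO)\ge T$ then finite speed of propagation gives $Wf(T,x)=0$ for every $f\in\cF(2T,\cO)$.

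To prove that $\Lambda_{x}$ is onto, suppose $v\in E_{x}\setminus\{0\}$ is $h_{E}$--orthogonal to its image. Using the representation $u(t,\cdot)=\int_{0}^{t}\tfrac{\sin((t-s)\sqrt{\Delta_{E}})}{\sqrt{\Delta_{E}}}f(s,\cdot)\,ds$ together with the self--adjointness of $S(t):=\tfrac{\sin(t\sqrt{\Delta_{E}})}{\sqrt{\Delta_{E}}}$ (so that its Schwartz kernel satisfies $S(t;z,w)^{*}=S(t;w,z)$), the condition $h_{E}(Wf(T,x),v)=0$ for all $f\in\cF(2T,\cO)$ becomes $\langle f,\Phi_{T-\cdot}\rangle_{L^{2}((0,T)\times M;E)}=0$ for all such $f$, where $\Phi_{t}:=S(t)(\delta_{x}\otimes v)$ is the distributional solution of $(\pa_t^2-\Delta_E)\Phi=0$ with $\Phi|_{t=0}=0$, $\pa_t\Phi|_{t=0}=\delta_{x}\otimes v$. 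Hence $\Phi\equiv 0$ on $(0,T)\times\cO$; since $\Phi$ is odd in $t$ (being $S(t)$ applied to a fixed datum, with $S(-t)=-S(t)$) it also vanishes on $(-T,T)\times\cO$. Shifting time by $T$ and invoking the unique continuation Theorem \ref{uniqcont}, $\Phi$ vanishes on the double cone, i.e.\ on the open set $\{(t,y):d_{g}(y,\cO)<T-|t|\}$, which contains $(0,x)$ because $d_{g}(x,\cO)<T$; differentiating in $t$ at $t=0$ yields $\delta_{x}\otimes v=\pa_t\Phi(0,\cdot)=0$ on a neighbourhood of $x$, forcing $v=0$, a contradiction.

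The step I expect to be the main obstacle is the last one: the Cauchy datum $\delta_{x}\otimes v$ is only a distribution, so $\Phi$ lies in $C(\bbR;H^{1-m/2-\epsilon}(M;E))$ rather than in $\CI$, and Theorem \ref{uniqcont} must be applied in slightly greater generality than it is stated (it is phrased for smooth solutions). Mollifying $\delta_{x}$ into a smooth bump does not resolve this, because $v$ is assumed orthogonal to the image only at the single point $x$, so after mollification the vanishing of $\Phi$ on $(0,T)\times\cO$ would hold only up to a small error; the distributional form of unique continuation therefore seems unavoidable. This is, however, innocuous: the Carleman estimates of \cite{EINT} underlying the Tataru--type theorem, and hence the unique continuation statement itself, are insensitive to the (finite, here negative) Sobolev order of the solution, so the argument applies to $\Phi$ as it stands.
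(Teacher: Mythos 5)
Your proof follows essentially the same path as the paper: part one is read off from the Blagovestchenskii identity, and part two proceeds by duality (your $\Phi_{T-s}$ is precisely $W^*(v\delta_x)$ in the paper's notation), oddness/odd reflection in time, and Tataru-type unique continuation applied to conclude $v=0$. Your remark that $\Phi$ is only distributional while Theorem \ref{uniqcont} is stated for smooth sections is a fair observation about a gap the paper itself leaves implicit (it applies $W^*$ on $H^{-s}$ data without comment), and your resolution — that the Carleman estimates underlying the unique continuation are insensitive to the finite Sobolev order of the solution — is the right way to close it.
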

\begin{proof}
The first claim follows immediately from the Blagovestchenskii identity above. For the second claim we note that it suffices to prove $E_x$ is spanned by the vectors $Wg(T,x)$, for $g\in \cF(T,\cO)$. Thus we are left to show that if $e\in E_x$ and 
\[ \langle e, Wg(T,x)\rangle_{h_E}=0, \quad g\in \cF(T,\cO)  \]
then $e=0$. By duality, applying $W^*: H^{-s}(M;E)\to H^{-s'}((0,T)\times M; E)$ to $e\delta_x$ gives a solution to the initial value problem
\[ \begin{cases}
(\pa_t^2-\Delta_E)u=0 \\ u|_{t=T}=0,\; \pa_tu|_{t=T}=e\delta_x
\end{cases} ,  \]
i.e. $W^*(e\delta_x)=u$. Hence we have $t\mapsto\langle u(t,x), g(t,x)\rangle_{h_E}=0$ for all $t\in (0,T)$. Combining this with the initial condition on $u$ we see $u|_{(0,T]\times M}\equiv 0$. Extending $u$ by the odd reflection across $t=T$ we obtain a section now satisfying $(\pa_t^2-\Delta_E)u=0$ on $(0,2T)\times M$, thus by \ref{uniqcont} we have $e=0$ as claimed.
\end{proof}
In particular this implies that $h_E$ restricted to $E|_{M(T,\cO)}$ for $T>0$ sufficiently small is determined by $L_{\sD,\cO}^{\text{wave}}$.

Next we show how to construct sections of $E$ out of double sequences of functions supported in $(T-r,T)\times \cO$. The following lemma is adapted from \cite{KOP}:
\begin{lemma}\label{double-crit}
Let $\cO\subset M$ be open and let $x\in M$. Then there exists $y\in \cO, \xi\in S_yM$ such that $\gamma_{y,\xi}(t)=x$ for $t<\tau(y,\xi)$. We define $t_k=t+\tfrac{1}{k}$, and set
\[ Y_k=\cO\cap B_{1/k}(y), \quad X_k=M_{\theta<1/k}(Y_k,t_k) \setminus M(\cO,t). \]
where 
\[ M_{\theta<1/k}(Y_k,t_k) := \{ z\in M(Y_k,t_k): \exists \wt y\in Y_k, d_g(z,\wt y)<t-\tfrac{1}{k}, \; \exists \eta\in T_yM, \gamma_{y,\eta}(\tfrac{1}{k})= y, \; \theta(\xi,\eta)<\tfrac{1}{k} \}  \]
with $\theta(\cdot,\cdot)$ equal to the angle between two vectors in $T_yM$ with respect to the inner product induced by $g$. Suppose $\Phi^x=\{f_{jk}\}_{j,k=1}^\infty\subset \CI((T-t_k,T)\times Y_k; E)$ satisfies
\begin{enumerate}
    \item For each $k=1,2,\ldots$ the sequence $\{Wf_{jk}(T,\cdot)\}$ converges weakly in $L^2(M;E)$ to function supported in $X_k$.
    \item There exists $C>0$ such that
    \[ ||Wf_{jk}(T,\cdot)||_{L^2(M;E)}\leq \frac{C}{\emph{Vol}(X_k)^{1/2}}  \]
    \item The limit $\lim_{j,k\to \infty} \langle Wf_{jk}(T,\cdot),Wg(T,\cdot)\rangle_{L^2(M;E)}$ exists for every $g\in \cF(2T,\cO)$.
\end{enumerate}
Then there is a vector $e(x;\Phi^x)\in E_x$ such that
\[ \lim_{j,k\to \infty} \langle Wf_{jk}(T,\cdot),\phi\rangle_{L^2(M;E)} = \langle e(x;\Phi^x), \phi(x)\rangle_{h_E}, \quad \phi\in \CI(M;E) \]
\end{lemma}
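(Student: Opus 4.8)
The plan is to run the scalar focusing-sequence argument of the boundary control method (as in \cite{KOP}, \cite{KKI}) and upgrade its output from a number to a vector in $E_x$, using the fibrewise metric $h_E$ and the spanning statement of Lemma \ref{ip-det}. As in \cite{KOP}, the limit $\lim_{j,k\to\infty}$ is read as an iterated limit (first $j\to\infty$, then $k\to\infty$): by hypothesis (1), for each fixed $k$ the sequence $Wf_{jk}(T,\cdot)$ has a weak $L^2(M;E)$-limit $w_k$ with $\operatorname{supp}(w_k)\subset X_k$, so that $\langle Wf_{jk}(T,\cdot),\phi\rangle_{L^2}\to\langle w_k,\phi\rangle_{L^2}$ as $j\to\infty$ for every $\phi\in L^2(M;E)$; and hypothesis (2), together with lower semicontinuity of the norm under weak convergence, gives $\|w_k\|_{L^2(M;E)}\leq C\operatorname{Vol}(X_k)^{-1/2}$.

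The geometric input I need is that $X_k$ has positive volume and contracts to $\{x\}$: $\operatorname{Vol}(X_k)>0$ for all $k$, $\operatorname{diam}(X_k)\to 0$, and $\bigcap_{k}\overline{X_k}=\{x\}$. This involves only $(M,g)$ and $d_g$: the angle cutoff $\theta<1/k$ in the definition of $M_{\theta<1/k}(Y_k,t_k)$ selects points reached from $Y_k$ in time $\approx t$ along directions $\approx\xi$, while subtracting $M(\cO,t)$ removes everything already reachable from $\cO$ in time $t$; that the resulting set is nonempty of positive volume for every $k$ yet shrinks to $x=\gamma_{y,\xi}(t)$ is exactly where the hypothesis $t<\tau(y,\xi)$ from \eqref{exp-closure} is used. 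Since no bundle structure enters, this is verified verbatim as in the scalar case and I take it from \cite{KOP}.

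Given this, the main estimate is Cauchy--Schwarz localized to $X_k$: for $\phi\in\CI(M;E)$,
\[
\bigl|\langle w_k,\phi\rangle_{L^2(M;E)}\bigr|=\Bigl|\int_{X_k}\langle w_k,\phi\rangle_{h_E}\,dV\Bigr|\leq \operatorname{Vol}(X_k)^{1/2}\,\|w_k\|_{L^2(M;E)}\,\sup_{X_k}|\phi|_{h_E}\leq C\sup_{X_k}|\phi|_{h_E},
\]
so, since $X_k$ contracts to $\{x\}$ and $\phi$ is continuous, $\limsup_{k\to\infty}\bigl|\langle w_k,\phi\rangle_{L^2}\bigr|\leq C\,|\phi(x)|_{h_E}$. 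Now put $V=\operatorname{span}\{Wg(T,\cdot):g\in\cF(2T,\cO)\}$. By hypothesis (3) the functional $L(\psi):=\lim_{k\to\infty}\langle w_k,\psi\rangle_{L^2}$ is defined on $V$, and the displayed estimate gives $|L(\psi)|\leq C\,|\psi(x)|_{h_E}$ for all $\psi\in V$; in particular $L$ vanishes on every $\psi\in V$ with $\psi(x)=0$, hence descends to a linear functional on $\operatorname{ev}_x(V)$, where $\operatorname{ev}_x\colon\CI(M;E)\to E_x$ is evaluation at $x$. By the second assertion of Lemma \ref{ip-det}, $\operatorname{ev}_x(V)=E_x$; so $L$ induces $\ell\in E_x^{*}$ with $|\ell(v)|\leq C|v|_{h_E}$, and Riesz representation in the Hermitian fibre $(E_x,h_E)$ produces a unique $e(x;\Phi^x)\in E_x$ with $L(\psi)=\langle e(x;\Phi^x),\psi(x)\rangle_{h_E}$ for all $\psi\in V$.

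It remains to pass from $V$ to an arbitrary $\phi\in\CI(M;E)$. Since $\operatorname{ev}_x(V)=E_x$, choose $\psi_0\in V$ with $\psi_0(x)=\phi(x)$ and write $\langle w_k,\phi\rangle=\langle w_k,\psi_0\rangle+\langle w_k,\phi-\psi_0\rangle$. The first term tends to $L(\psi_0)=\langle e(x;\Phi^x),\phi(x)\rangle_{h_E}$; the second is bounded by $C\sup_{X_k}|\phi-\psi_0|_{h_E}$, which tends to $C\,|(\phi-\psi_0)(x)|_{h_E}=0$ because $\phi-\psi_0$ is continuous and vanishes at $x$. Hence $\lim_{k}\langle w_k,\phi\rangle=\langle e(x;\Phi^x),\phi(x)\rangle_{h_E}$, i.e. $\lim_{j,k\to\infty}\langle Wf_{jk}(T,\cdot),\phi\rangle=\langle e(x;\Phi^x),\phi(x)\rangle_{h_E}$, as claimed. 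The main obstacle is the geometric bookkeeping of the second paragraph --- that the elaborate set $X_k$ is simultaneously of positive volume and contracting to $x$ --- but this is a scalar statement already contained in \cite{KOP}; the genuinely new, bundle-valued step is the fibrewise Riesz representation, which is immediate once Lemma \ref{ip-det} is available.
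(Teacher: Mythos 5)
Your proposal is correct and takes essentially the same approach the paper indicates: the paper gives only the one-line remark that the lemma follows from Lemma~\ref{ip-det} together with the geometric shrinking property $\operatorname{diam}(X_k)\to 0$, $X_k\to x$, deferring the rest to \cite{KOP}, and you have reconstructed exactly that argument --- weak limits $w_k$ supported in $X_k$ with norm control from lower semicontinuity, the localized Cauchy--Schwarz estimate forcing the functional to factor through evaluation at $x$, and then the spanning statement of Lemma~\ref{ip-det} plus fibrewise Riesz representation in $(E_x,h_E)$ to produce $e(x;\Phi^x)$. The extension from $\psi\in V$ to arbitrary $\phi\in\CI(M;E)$ by subtracting a $\psi_0\in V$ with $\psi_0(x)=\phi(x)$ is the right way to close the argument, and no step fails.
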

This lemma is a consequence of lemma \eqref{ip-det}, and otherwise the proof is the same as given in \cite{KOP}, using the fact that from its construction $\text{diam}(X_k)\to 0$ and $X_k\to x$. Given this criterion for constructing vectors of $e\in E_x$ we observe that such a double sequence $\Phi^x=\{f_{jk}\}_{j,k=1}^\infty$ satisfying the hypotheses of lemma \eqref{double-crit} exists by applying lemma \eqref{approx-contr} to construct a sequence for each $k=1,2,\ldots$
\[  Wf_{jk} \xrightarrow{j\to \infty} \frac{1_{X_k}e}{\Vol{X_k}}\]
where $1_{X_k}$ is the indicator function of $X_k$. This sequence clearly satisfies the conclusions of lemma \eqref{double-crit}, as claimed. Further, if we choose $\Phi^x$ for $x\in U$ such that
\[ x\mapsto \langle e(x;\Phi^x), Wg(T,\cdot)\rangle_{h_E}, \quad g\in \cF(2T,\cO)  \]
is smooth, then the assignment $e(x)= e(x;\Phi^x)$ will be a smooth section of $E|_{U}$ as this implies $e$ depends smoothly in $x$ with respect to an orthonormal frame $\{Wg_\ell(T,\cdot)\}_{\ell=1}^{\text{Rk} E}$ by lemma \eqref{ip-det}.

\begin{theorem}
Let $\cO\subset M$ be open, non-empty, and bounded. Then the Dirac bundle $\sD$ comprised of $(M,g)$, Hermitian vector bundle with connection $(E,h_E, \nabla^E)$ over $M$, and clifford multiplication $\cl:\Cl(T^*M,g)\to \End{E}$ are all determined by the source-to-source solution operator $L_{\sD,\cO}^{\text{wave}}$ and the Hermitian vector bundle $E|_{\cO}$.
\end{theorem}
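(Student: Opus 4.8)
The plan is to assemble the pieces already in place. By the earlier propositions the wave source-to-solution operator $L_{\sD,\cO}^{\text{wave}}$ together with $E|_\cO$ recovers the Riemannian manifold $(M,g)$ up to isometry, recovers $h_E$ on $E|_{M(T,\cO)}$ for small $T$ (Lemma~\ref{ip-det}), and via the construction built on Lemma~\ref{double-crit} produces, for each $x\in M$, vectors $e(x;\Phi^x)\in E_x$ and smooth local frames $\{Wg_\ell(T,\cdot)\}_{\ell=1}^{\text{Rk}\,E}$ near $x$. First I would use \eqref{exp-closure}: every $x\in M$ lies on a geodesic $\gamma_{y,\xi}$ with $y\in\cO$ and $t<\tau(y,\xi)$, and by the preceding proposition the cut time $\tau(y,\xi)$ and all distances $d_g(p,z)$ along such geodesics are determined, so the sets $X_k$ and $Y_k$ of Lemma~\ref{double-crit} are determined from the data. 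Hence for each $x$ we obtain the fibre $E_x$ abstractly, together with its inner product $h_{E,x}$, realized as a subspace of the (data-determined) space of limits $\lim_{j,k}\langle Wf_{jk}(T,\cdot),Wg(T,\cdot)\rangle$ indexed by $g\in\cF(2T,\cO)$. Gluing these fibres over a cover of $M$ by geodesically reachable charts, with transition functions read off from the smooth frames $\{Wg_\ell(T,\cdot)\}$, reconstructs the Hermitian vector bundle $(E,h_E)$ up to isomorphism covering the isometry.

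Next I would recover the connection $\nabla^E$. The key observation is that the wave operator $\pa_t^2-\Delta_E$ is principally scalar but its zeroth/first order part encodes $\Delta_E=\nabla^{E*}\nabla^E+\cR$ (a Weitzenböck-type remainder), and the Blagovestchenskii identity lets one compute not just $\langle Wf(T,\cdot),Wg(T,\cdot)\rangle$ but, by differentiating the wave solutions in $t$ and applying the equation, the pairings $\langle \Delta_E Wf(T,\cdot),Wg(T,\cdot)\rangle$ and iterates. Using the local frames $e_\ell(x)=Wg_\ell(T,x)$ built above, one evaluates $\Delta_E$ in this frame near any $x\in M$; since $g$ is known, the second-order part of $\Delta_E$ is known, and comparing with the Bochner form determines the Christoffel-type symbols of $\nabla^E$ (the connection one-form in the frame $\{e_\ell\}$) up to the gauge ambiguity already fixed by the choice of frame. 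Equivalently, one recovers $\nabla^E e_\ell$ as a section of $T^*M\otimes E$ by testing against further data-determined sections. This determines $\nabla^E$ as a Hermitian connection on the reconstructed $(E,h_E)$.

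Finally, Clifford multiplication is forced by the compatibility identity stated in \S\ref{dirac-def}: $[\nabla^E,\cl(\theta)]=\cl(\nabla^g\theta)$ for all $\theta$, together with the Clifford relations $\cl(\alpha)\cl(\beta)+\cl(\beta)\cl(\alpha)=-2g(\alpha,\beta)\Id$ and compatibility with $h_E$. Having recovered $g$, $(E,h_E)$, and $\nabla^E$, I would fix $\cl$ on $T^*\cO$ from the initial data on $E|_\cO$, then parallel transport along the geodesics of \eqref{exp-closure}: the identity $[\nabla^E,\cl(\theta)]=\cl(\nabla^g\theta)$ is a first-order linear ODE for $\cl$ along each geodesic with known coefficients, so $\cl$ is uniquely determined everywhere on $M$ by its values over $\cO$. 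This exhibits the full Dirac bundle $\sD=(M,g,E,h_E,\nabla^E,\cl)$ as determined by $L_{\sD,\cO}^{\text{wave}}$ and $E|_\cO$.

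I expect the main obstacle to be Step~2, recovering $\nabla^E$: one must show that the data genuinely sees the \emph{first-order} part of $\Delta_E$ and not merely its principal symbol, i.e. that the remainder after subtracting the (known) scalar Laplacian is the Bochner Laplacian of a connection rather than an arbitrary zeroth-order perturbation, and that the frame-dependence is exactly the expected gauge freedom. The other steps are either immediate consequences of the cited lemmas or, as in Step~3, reduce to uniqueness for a transport ODE.
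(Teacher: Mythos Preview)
Your proposal is correct and follows essentially the same route as the paper: recover $(M,g)$ from the distance data, build local frames of $E$ via the double-sequence construction of Lemma~\ref{double-crit}, read off transition functions to get $(E,h_E)$ globally, extract $\nabla^E$ from the subprincipal part of $\Delta_E$, and finally pin down $\cl$ from the compatibility identity $[\nabla^E,\cl(\theta)]=\cl(\nabla^g\theta)$.

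The one place the paper is sharper than your sketch is precisely the point you flag as the obstacle. To isolate the connection one-form $A$ in a local trivialization $\nabla^E=d+A$, the paper does not merely compare $\Delta_E$ with the scalar Laplacian; it first observes that time-translation invariance lets you determine $Wf(t,\cdot)$ for all $t\in(0,T)$, then differentiates twice to obtain $\Delta_E Wf$, and by duality recovers $\Delta_E\phi$ for every $\phi\in\CI_0(U;E)$. Writing $(\nabla^E)^*\nabla^E\phi=d^*d\phi-2(A,d\phi)+\text{(zeroth order)}$, it then tests with sections $\phi_\ell^k$ satisfying $\phi(x)=0$ and $\pa_j\phi(x)=\delta_{jk}e_\ell(x)$; the vanishing of $\phi$ at $x$ kills all zeroth-order contributions (including the Weitzenb\"ock remainder you worry about), so $((\nabla^E)^*\nabla^E-d^*d)\phi(x)=-2g^{ik}(x)A_ie_\ell(x)$ reads off $A$ directly. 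This is exactly the mechanism that resolves your concern about separating first-order from zeroth-order terms.

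For $\cl$, your ODE-along-geodesics argument is a more explicit version of what the paper states tersely (it simply asserts the compatibility condition ``fixes'' $\cl$ once $\nabla^E$ and $\nabla^g$ are known). Note, though, that the data in this theorem is only $E|_\cO$ as a Hermitian bundle, not $\cl|_{\cO}$; the paper does not spell out where the initial condition for your transport ODE comes from either, so on this point your proposal and the paper are equally brief.
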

\begin{proof}
Having already shown the Riemannian manifold $(M,g)$ is determined by $L_{\sD,\cO}^{\text{wave}}$, so it remains to show the given data also determines the Hermitian bundle with connection. By making $\cO$ smaller if needed, we may assume that $\cO\subset M$ is contractible and thus $E|_\cO$ is trivial. 

For each $x\in \cO$ we can choose a double sequence $\Phi^x=\{f_{jk}^x\}_{j,k=1}^\infty$ satisfying the criteria of lemma \ref{double-crit} and hence converges to a prescribed $e\in E_x$. If we further require that this family $(\Phi^x)_{x\in \cO}$ be chosen such that
\[ x\mapsto \langle e(x;\Phi^x), Wg(T,\cdot)\rangle_{h_E}, \quad g\in \cF(2T,\cO) , \]
is smooth, then $e(x)=e(x;\Phi^x)$ will be smooth in $\cO$ as well. 

Beginning with an orthonormal frame $\{e_\ell(x)\}$ for $E|_{\cO}$, we can similarly choose for each of the $\ell=1,\ldots,\text{Rk}\; E$ families of double sequences $\Phi_\ell^x$ representing $e_\ell(x)$. Thus the coefficient functions $h_{i,j}(x)=\langle e_i(x),e_j(x)\rangle_{h_E}=\delta_{ij}(x)$ are determined by $L_{\sD,\cO}^{\text{wave}}$, and in this trivialization $E|_{\cO}$ we can express the Hermitian metric as $\langle v, w\rangle_{h_E}=\langle v^\ell e_\ell(x), w^\kappa e_\kappa(x)\rangle_{h_E}=\sum v^\ell\overline{w^\ell}$.

To determine the bundle and metric globally we need only determine the bundle transition functions. From compactness $\inj_g>c$, and thus there exists a finite cover $\{B_c(p_i)\}_{i=1}^N$ of $M$. Further,
for each $x\in M$ there exists $y\in \cO$ and $\xi\in S_yM$ such that $\gamma_{y,\xi}(t)=x$ with $t<\tau(y,\xi)$. Since each $E|_{B_c(p_i)}\simeq B_c(p_i)\times E_{p_i}$ is trivial, it remains only to determine the transition functions $T^{ij}\in \CI(B_c(p_i)\cap B_c(p_j); \End(E_p))$. With respect to the chosen orthonormal frame $e_\ell(x)$ over $B_c(p_i)\cap B_c(p_j)$ we can write this matrix valued function $T^{ij}(x)=(T_{\ell,\kappa}^{ij}(x))_{\ell,\kappa=1}^{\text{Rk} E}$ in terms of the frame, i.e. by using the local expression of the metric $h_E$ over $B_c(p_i)\cap B_c(p_j)$. The frame $e_\ell(x)$, being generated by applying lemma \ref{double-crit} to points $x\in B_c(p_i)\cap B_c(p_j)$, is thus determined by $L_{\sD,\cO}^{\text{wave}}$ as above. Thus we have recovered the transition functions, hence the entire Hermitian bundle and metric.

Having determined the transition functions, to determine the connection it suffices to determine $\nabla^E$ on a trivialization $E|_{U}\simeq U\times E_p$ where $\nabla^E=d+A$, for $A\in \Omega^1(U;E)$. This portion of the argument follows that of Kurylev Oksanen Paternain and we include it for completeness. Choose $T>0$ sufficiently large that $U\subset M(2T,\cO)$. Since $Wf(T,\cdot)$ is determined by $L_{\sD,\cO}^{\text{wave}}$ for all $f\in \cF(2T,\cO)$, and the wave equation \eqref{direct-wave-eqn} is time-translation invariant, we have also determined $Wf(t,\cdot)$ for $t\in (0,T)$. Differentiating twice in $t$ gives $\Delta_E Wf(t,\cdot)$ for $t\in (0,T)$. Hence by duality
\[ \langle f(T,\cdot), \phi\rangle_{L^2(M;E)} = \langle Wf(T,\cdot), \Delta_E \phi \rangle_{L^2(M;E)}  \quad \phi\in \CI_0(U;E) ,  \]
thus by lemma \eqref{approx-contr} we can determine and densely approximate $L^2(U;E)$ by the functions $Wf(T,\cdot)$ thus determining $\Delta_E\phi$. Now, because $\Delta_E$ differs from $(\nabla^E)^*\nabla^E$ by a zeroth order term, and $\nabla^E=d+A$ over $U$, we compute
\[ (\nabla^E)^*\nabla^E\phi = d^*d\phi -2(A,d\phi)+(d^*A)u -(A,Au),    \]
and conclude that the principal part of $(\nabla^E)^*\nabla^E$ is $d^*d$ and its first order part is $2(A,d\phi)=2g^{jk}A_j\pa_k\phi dx^k$. Because the principal part depends only on the metric $g$, we can determine $d^*d$ in $U$. For $x\in U$ we consider $\phi(x)=\phi_\ell^k(x)$ such that $\phi(x)=0$ and $\pa_j\phi(x)=\delta_{jk}e_\ell(x)$. Thus we can determine the endomorphism piece $A$ of the connection from the first order term in $(\nabla^E)^*\nabla^E\phi(x)$, i.e. determine $A\in \Omega^1(U;E)$ from the determination of 
\[  ((\nabla^E)^*\nabla^E-d^*d)\phi = -2(A,d\phi) = -2g^{ik}(x)A_i e_\ell(x)  \]
for $x\in U$.

Finally, having determined every element of the Dirac bundle $\cD$ except for the Clifford multiplication homomorphism $\cl: \Cl(T^*M,g)\to \End(E)$ we observe that having determined both $\nabla^E$ and $\nabla^g$ we have fixed $\cl$ due to the Clifford compatibility condition that
\[ [\nabla^E,\cl(\theta)] = \cl(\nabla^g\theta), \quad \theta\in T^*M . \]
\end{proof}

\begin{proof}[Proof of Corollary \ref{cor}]
We follow the proof of \cite[Cor 2]{HLOS}. To obtain the total spectral data on $\cO$ we notice that not only are the remaining negative eigenvalues simply minus of the positive eigenvalues, but the unknown negative eigensections are simply the image of the positive eigensections under the \emph{Chirality operator}, 
\[ \gamma = i^{\lceil\frac{m}{2}\rceil} \cl(e_1)\cl(e_2)\ldots\cl(e_m) \in \End(E_x),   \]
where $\{e_j\}_{j=1}^m$ is any choice of orthonormal basis of $T_x\cO$. Thus, because $\D\varphi=\lambda\varphi$ implies $\D(\gamma\varphi)=-\lambda\;\gamma\varphi$ we now have knowledge of all eigensections restricted to $\cO$. Writing the $\D^2=\Delta_E$ eigensections as $\psi_k(x)=\varphi_{k}(x)-\gamma\circ\varphi_{k}(x)$, we now have $\{\lambda_k^2,\psi_k(x) \}_{j=1}^\infty$ as a spectral resolution of $\Delta_E$ on $L^2(M;E)$. From here, as in \cite{HLOS}, given any $f\in \CI_0(\Rp\times \cO)$, we can calculate the approximate `Fourier coefficients' of the wave source-to-solution operator $\cI_k(t)=\langle w^f(t,\cdot),\psi_k\rangle_{L^2}$, and from there recover the approximate source-to-solution operator $\sum\limits_{k=1}^\infty \wt \cI_k(t)\varphi_k(x)=W[h\cdot f](t,x)$ (i.e. the solution associated to a conformal change of source) with $\wt \cI_k(t)=\int_0^t\int_\cO \frac{\sin(\lambda_k t)}{\sin(\lambda_k)}f(s,x)\varphi_j(x)\; h\cdot dV_g(x)ds$. From here the result follows from \ref{main-thm} (after determining the conformal factor $h$, as in \cite[Thm 6]{HLOS}).
\end{proof}

\section{acknowledgements}
Travel funding in support of this work was provided to HQ by the Milliman committee of UW math. The research of GU was partly supported by NSF, a Simons Fellowship, a Walker Family Endowed Professorship at UW and a Si-Yuan Professorship at IAS, HKUST. 

\bibliographystyle{abbrv} 
\bibliography{refs} 

\end{document}